\DeclareMathOperator{\Tr}{Tr}
\DeclareMathOperator{\meas}{meas}
\newcommand{\abs}[1]{\lvert#1\rvert}
\newcommand{\Abs}[1]{\left\lvert#1\right\rvert}
\newcommand{\norm}[1]{\lVert#1\rVert}
\newcommand{\jap}[1]{\langle#1\rangle}
\newcommand{\bbR}{{\mathbb R}}
\newcommand{\bbC}{{\mathbb C}}
\newcommand{\bbN}{{\mathbb N}}
\newcommand{\bbZ}{{\mathbb Z}}
\newcommand{\calL}{\mathcal{L}}
\newcommand{\Sch}{\mathbf{S}}
\numberwithin{equation}{section}
\theoremstyle{plain}
\newtheorem{theorem}{\bf Theorem}[section]
\newtheorem*{theorem*}{Theorem 1.1$'$}
\newtheorem{lemma}[theorem]{\bf Lemma}
\theoremstyle{definition}
\theoremstyle{remark}
\newtheorem*{remark*}{\bf Remark}
\newtheorem{remark}[theorem]{\bf Remark}
\newcommand{\wt}{\widetilde}
\newcommand{\eps}{\varepsilon}
\newcommand{\loc}{\mathrm{loc}}
\newcommand{\1}{\mathbbm{1}}
\newcommand{\bb}{{\mathbf{b}}}
\newcommand{\bh}{{\mathbf{h}}}
\newcommand{\bc}{{\mathbf{c}}}
\newcommand{\bg}{{\mathbf{g}}}
\newcommand{\fb}{{\mathfrak{b}}}
\newcommand{\fs}{{\mathfrak{s}}}
\newcommand{\bu}{{\mathbf{u}}}
\newcommand{\Gank}{\Gamma} 
\newcommand{\bGank}{{\mathbf{\Gamma}}}
\begin{document}

\title[Asymptotics of eigenvalues of Hankel operators]{Asymptotic behaviour 
of eigenvalues of  Hankel operators}

\author{Alexander Pushnitski}
\address{Department of Mathematics, King's College London, Strand, London, WC2R~2LS, U.K.}
\email{alexander.pushnitski@kcl.ac.uk}

\author{Dmitri Yafaev}
\address{Department of Mathematics, University of Rennes-1,
Campus Beaulieu, 35042, Rennes, France}
\email{yafaev@univ-rennes1.fr}

\subjclass[2010]{47B35, 47B06}

\keywords{Hankel and pseudodifferential operators, eigenvalues, Weyl asymptotics}

\begin{abstract} 
We consider compact Hankel operators realized in $ \ell^2(\bbZ_+)$
as infinite matrices $\Gank$ with matrix elements $h(j+k)$. Roughly speaking, we show that if
$h(j)\sim (b_{1}+ (-1)^j b_{-1}) j^{-1}(\log j )^{-\alpha}$ as $j\to \infty$
 for some $\alpha>0$, then the eigenvalues of $\Gank$ satisfy 
 $\lambda_{n}^{\pm} (\Gank)\sim c^{\pm} n^{-\alpha}$  
as $n\to \infty$. The asymptotic coefficients $c^{\pm}$ 
are explicitly expressed in terms of  the asymptotic coefficients $b_{1} $ and $b_{-1}$.  
Similar results are obtained for Hankel operators $\bGank$ realized in $ L^2(\bbR_+)$ as integral operators with kernels $\bh(t+s)$. In this case the asymptotics  of eigenvalues   $\lambda_{n}^{\pm} (\bGank)$ are determined by the behaviour  of $\bh(t)$ as $t\to 0$ and as $t\to \infty$.
\end{abstract}

\date{8 December 2014}

\maketitle

%%%%%%%%%%%%%%%%%%%%%%%%%%%%%%%%%%%%%%%%%%%%%%%%%
%%%%%%%%%%%%%%%%%%%%%%%%%%%%%%%%%%%%%%%%%%%%%%%%%
\section{Introduction}\label{sec.a}
%%%%%%%%%%%%%%%%%%%%%%%%%%%%%%%%%%%%%%%%%%%%%%%%%
%%%%%%%%%%%%%%%%%%%%%%%%%%%%%%%%%%%%%%%%%%%%%%%%%

%%%%%%%%%%%%%%%%%%%%%%%%%%%%%%%%%%%%%%%%%%
\subsection{Overview}\label{sec.a1}
%%%%%%%%%%%%%%%%%%%%%%%%%%%%%%%%%%%%%%%%%%
For a   sequence $\{h(j)\}_{j=0}^\infty$     of complex numbers,
a Hankel operator $\Gank(h)$ in the space $ \ell^2(\bbZ_+)$ is formally defined as the 
``infinite matrix'' $\{h(j+k)\}_{j,k=0}^\infty$, that is,
\begin{equation}
(\Gank(h) u) (j)= \sum_{k=0}^\infty h(j+k) u (k), \quad u= (u (0), u (1), \ldots).
\label{eq:a5}
\end{equation}
We also consider integral Hankel operators $\bGank(\bh)$ in the space  $L^2(\bbR_+)$
($\bbR_+=(0,\infty)$),
formally defined by
\begin{equation}
(\bGank(\bh)\bu)(t)=\int_0^\infty \bh(t+s)\bu (s)ds,
\label{a5}
\end{equation}
where $\bh\in L^1_\loc(\bbR_+)$; 
this function is called the \emph{kernel} of the Hankel operator $\bGank(\bh)$. 
Under the assumptions below the operators $\Gank(h)$ and $\bGank(\bh)$ are bounded.
We will refer to the theory of Hankel operators in $ \ell^2(\bbZ_+)$ as  to the ``discrete case''
and to the one of the integral Hankel operators in $L^2(\bbR_+)$  as to the ``continuous case'';
objects related to the continuous case will be denoted by boldface symbols. 
Of course the operator $\Gank(h)$ (resp. $\bGank(\bh)$)  is self-adjoint 
if and only if the sequence $\{h(j)\}$ (resp. the function $\bh(t)$) is real valued. 
Background information on the theory of Hankel operators can be found in the book \cite{Peller} by  V.~Peller.

In this paper, we are interested in compact self-adjoint Hankel operators. 
Sharp estimates of eigenvalues of Hankel operators (and, more generally, of singular values in the non-self-adjoint case) are very well known.
At the same time, there are practically no results on the asymptotic behaviour
of these eigenvalues. 
The only exceptions known to us are the papers \cite{Widom, Yafaev2},
which will be  discussed below. 
This state of affairs is in a sharp contrast with the case of differential operators, 
where the Weyl type asymptotics of eigenvalues is established in a large variety of situations. 
Our goal here is to fill in this gap by 
describing a class of Hankel operators where the eigenvalue asymptotics (in the power  scale)
can be found explicitly.

Our approach relies on the following three ingredients:
\begin{enumerate}[(i)]
\item
A  result of \cite{Yafaev3} which establishes the unitary equivalence of Hankel operators  
to pseudodifferential operators ($\Psi$DO)  in $L^2 (\bbR)$ of a certain special class.
\item
Standard Weyl type  spectral asymptotics for  the corresponding $\Psi$DO, obtained by Birman and Solomyak in  \cite{BS4,BS3}.
\item
Estimates for singular values of Hankel operators from \cite{0}
(based on earlier results by Peller).
\end{enumerate}

In general, the study of eigenvalue asymptotics for any class of operators
involves two steps: 
construction of an appropriate model problem where the eigenvalue asymptotics
can be determined more or less explicitly, and using eigenvalue estimates  (or variational methods)
to extend the asymptotics   to a wider class of operators. 

As mentioned above, the relevant estimates in a convenient form 
were prepared in our previous paper \cite{0}.
The most important novel feature of this work is the construction of 
the appropriate model Hankel operators. 
In order to construct model Hankel operators, we proceed in two steps. 
Given a Hankel operator $\Gamma$,
first we construct a suitable  $\Psi$DO $\Psi_*$ of a negative order such that 
the spectral asymptotics of $\Psi_*$ can be established (see item (ii) above).
Then we use the unitary equivalence (see item (i) above) to map
$\Psi_*$ into a Hankel operator $\Gamma_*$  with the same spectrum.   For a ``correct" choice of $\Psi_*$, the Hankel operators  $\Gamma$  and $\Gamma_*$ are close to each other, and so they have the same leading terms of eigenvalue asymptotics. In more detail, our approach is outlined  in Sections~3.1 and 4.1 for the operators \eqref{a5} and
\eqref{eq:a5}, respectively.

\subsection{Discrete case}
 Let $\{\lambda_n^+(\Gank)\}_{n=1}^\infty$ be the non-increasing sequence
of positive eigenvalues of a compact 
self-adjoint operator $\Gank$ (with multiplicities taken into account), and let $\lambda_n^-(\Gank)=\lambda_n^+(-\Gank)$. We define also the eigenvalue counting 
function
\begin{equation}
n_\pm(\eps;\Gank)
=
\# \{n: \lambda_n^\pm(\Gank)>\varepsilon\}, 
\quad \eps>0.
\label{eq:CF}
\end{equation}

We start our discussion from the discrete case. 
In order to motivate our main result, 
let us consider   the sequence
\begin{equation}
h(j)=\frac1{(j+1)^\gamma}, 
\quad 
\gamma\geq1.
\label{a1d}
\end{equation}
If $\gamma=1$ then  the corresponding Hankel operator $\Gank(h)$, known as the Hilbert matrix,  
is bounded  (but not compact). 
From here by a simple argument one obtains
\begin{align}
h(j)=O(j^{-1}), \quad j\to\infty
\quad &\Rightarrow \quad 
\Gank(h) \text{ is bounded,}
\label{a1ff}
\\
h(j)=o(j^{-1}), \quad j\to\infty
\quad &\Rightarrow \quad 
\Gank(h) \text{ is compact.}
\label{a1f}
\end{align}
Roughly speaking, one expects that a faster rate of convergence of the sequence $h(j)$ to zero
as $j\to\infty$
results in a faster convergence of the eigenvalues $\lambda^\pm_n(\Gank(h))$ to zero
as $n\to\infty$. 
Indeed, there is a  deep result of H.~Widom who showed  in \cite{Widom} that for $\gamma>1$ 
the Hankel operator corresponding to the sequence \eqref{a1d} is non-negative   and its eigenvalues converge to zero \emph{exponentially} fast:
$$
\lambda_n^+(\Gank(h))=\exp(-\pi \sqrt{2\gamma n}+o(\sqrt{n})), \quad n\to\infty.
$$

Our goal is to study the case intermediate between 
$\gamma=1$ and $\gamma> 1$, when $h(j)$ behaves  as  $j^{-1} ( \log j)^{-\alpha}$ with some $\alpha>0$  for large $j$.
To give the flavour of our main result, 
first we state it in a particular case;
the full statement is given in Theorem~\ref{cr.a3} below. 
We use the notation $x_\pm=\max\{0,\pm x\}$;  $B(\cdot,\cdot)$ is the 
standard Beta function, 
\begin{equation}
B(a,b)=\int_1^\infty (t-1)^{a-1}t^{-a-b}dt
=
\frac{\Gamma(a)\Gamma(b)}{\Gamma(a+b)}
\label{a16}
\end{equation}
(of course, the symbols $\Gamma$ in the r.h.s. of \eqref{a16} stand for the Gamma function
rather than for Hankel operators). 
Put
\begin{equation}
v(\alpha):=2^{-\alpha}
\pi^{1-2\alpha}
B(\tfrac1{2\alpha},\tfrac12)^{\alpha};
\label{a4}
\end{equation}
in particular, $v(1)=2^{-\alpha}$.
In what follows, $\log$ denotes the natural (base $e$) logarithm. 
 
%%%%%%%%%%%%%%%%%%%
\begin{theorem}\label{thm.aa1}
%%%%%%%%%%%%%%%%%%%
Let $\alpha>0$, $b_1,b_{-1}\in\bbR$, and let  
\begin{equation}
h(j)=(b_1+(-1)^j b_{-1}) j^{-1}(\log j)^{-\alpha}, \quad j\geq2;
\label{a2}
\end{equation}
the choice of $h(0)$ and $h(1)$ 
\emph{(\emph{or of any finite number of $h(j)$})} is not important. 
Then   the eigenvalues of the corresponding Hankel operator $\Gank(h)$   
have the asymptotic behaviour 
\begin{equation}
\lambda_n^\pm(\Gank(h))
=
c^\pm n^{-\alpha}+o(n^{-\alpha}), 
\quad
c^\pm=v(\alpha)\bigl((b_1)^{1/\alpha}_\pm+(b_{-1})^{1/\alpha}_\pm\bigr)^\alpha,
\label{a1c}
\end{equation}
as $n\to\infty$.
\end{theorem}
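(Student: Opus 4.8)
The plan is to follow the three-ingredient strategy described in Section~\ref{sec.a1}: reduce the Hankel operator to a pseudodifferential operator, apply Birman--Solomyak Weyl asymptotics, and transfer the result back to a model Hankel operator, closing the argument with the singular-value estimates from \cite{0}. First I would observe that the sequence \eqref{a2} splits naturally as $h=h_{+}+h_{-}$, where $h_{\pm}(j)=b_{\pm1}(1\pm(-1)^j)/2\cdot j^{-1}(\log j)^{-\alpha}$ captures the ``even'' and ``odd'' parts; correspondingly one expects the two channels $b_{1}$ and $b_{-1}$ to contribute to the spectral density additively and independently, which is precisely what the formula $c^\pm=v(\alpha)\bigl((b_1)^{1/\alpha}_\pm+(b_{-1})^{1/\alpha}_\pm\bigr)^\alpha$ encodes (the exponent $1/\alpha$ being the signature of ``adding counting functions $n_\pm(\eps)\sim \text{const}\cdot\eps^{-1/\alpha}$''). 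The decay $j^{-1}(\log j)^{-\alpha}$ is the borderline-of-compactness rate $j^{-1}$ modulated by a slowly varying factor, so by \eqref{a1f} the operator is compact; the point is to pin down the constant in front of $n^{-\alpha}$.

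Next I would construct the model $\Psi$DO. Using the unitary equivalence of \cite{Yafaev3}, a Hankel operator with symbol behaving like $j^{-1}(\log j)^{-\alpha}$ near $j=\infty$ corresponds to a $\Psi$DO on $L^2(\bbR)$ whose symbol, in suitable variables, looks like $a(x,\xi)$ vanishing like $(\log)^{-\alpha}$ along the relevant ray, i.e.\ a $\Psi$DO of order zero with a symbol degenerating logarithmically. The contribution of $b_{1}$ (even part) and of $b_{-1}$ (odd part) show up at two different ``ends'' of phase space (this is the mechanism by which singularities of the Hankel symbol at the two endpoints of the circle, or equivalently the behaviour of $\bh$ at $0$ and $\infty$ in the continuous case, both feed into the asymptotics). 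For such a $\Psi_*$ the Birman--Solomyak results \cite{BS4,BS3} give $n_\pm(\eps;\Psi_*)=\eps^{-1/\alpha}\bigl(\varkappa_\pm+o(1)\bigr)$ as $\eps\to0$, with $\varkappa_\pm$ an explicit phase-space integral of the symbol. The bulk of the computational work is evaluating that integral and checking it reproduces $v(\alpha)$ and the combination of $(b_{\pm1})^{1/\alpha}_\pm$; the Beta-function identity \eqref{a16} and the definition \eqref{a4} of $v(\alpha)$ are tailored so that this integral closes in elementary terms. Mapping $\Psi_*$ back via \cite{Yafaev3} produces a model Hankel operator $\Gank_*$ with exactly these asymptotics, hence $\lambda_n^\pm(\Gank_*)=c^\pm n^{-\alpha}+o(n^{-\alpha})$ by inverting the relation between $n_\pm(\eps;\cdot)$ and $\lambda_n^\pm(\cdot)$.

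Finally I would compare $\Gank(h)$ with $\Gank_*$. The difference $\Gank(h)-\Gank_*$ is a Hankel operator whose symbol (or kernel) decays strictly faster than the borderline rate — it is $o\bigl(j^{-1}(\log j)^{-\alpha}\bigr)$, indeed one arranges the model so the leading logarithmic term is matched exactly — and the estimates of \cite{0} then give $s_n\bigl(\Gank(h)-\Gank_*\bigr)=o(n^{-\alpha})$. By the standard Ky Fan inequality $\lambda_{n+m-1}^\pm(A+B)\le\lambda_n^\pm(A)+\lambda_m^\pm(B)$ (and its mirror), a $o(n^{-\alpha})$ perturbation does not disturb the leading term, so $\lambda_n^\pm(\Gank(h))=\lambda_n^\pm(\Gank_*)+o(n^{-\alpha})=c^\pm n^{-\alpha}+o(n^{-\alpha})$, which is \eqref{a1c}. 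The main obstacle I anticipate is the middle step: choosing the model symbol $a(x,\xi)$ so that (a) it genuinely falls in the class covered by the Birman--Solomyak theorems, (b) its phase-space integral evaluates to the asserted constant, and (c) the back-transported $\Gank_*$ differs from $\Gank(h)$ only by a symbol/kernel in the $o(j^{-1}(\log j)^{-\alpha})$ ideal covered by the \cite{0} estimates — reconciling these three requirements simultaneously, especially keeping track of how the two coefficients $b_{\pm1}$ sit at the two ends of phase space, is where the real work lies; the perturbation step and the counting-function bookkeeping are then routine.
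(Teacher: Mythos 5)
Your overall route---a model Hankel operator built through the unitary equivalence with a $\Psi$DO (Theorem~\ref{thm.e2}), the Birman--Solomyak Weyl asymptotics (Theorem~\ref{thm.b2}) with the Beta-function evaluation of the phase-space integral giving $v(\alpha)$, and then a perturbation argument based on the singular-value estimates of \cite{0}---is exactly the paper's strategy. However, your closing step contains a genuine gap: you assert that $s_n(\Gank(h)-\Gank_*)=o(n^{-\alpha})$ follows from \cite{0} as soon as the difference of the kernels is $o\bigl(j^{-1}(\log j)^{-\alpha}\bigr)$. That is only true for $\alpha<1/2$. For $\alpha\geq 1/2$, Theorem~\ref{thm.a1} requires, in addition to the size condition, the conditions \eqref{ca7} on the iterated differences $g^{(m)}(j)$ for all $m\leq M(\alpha)=[\alpha]+1$, and this extra hypothesis cannot simply be dropped (the remark after Theorem~\ref{thm.a1}, citing Example~4.7 of \cite{0}, shows conditions on differences of order comparable to $\alpha$ are genuinely needed). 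Since Theorem~\ref{thm.aa1} is claimed for every $\alpha>0$, your argument as written does not cover the range $\alpha\geq1/2$.

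Concretely, two pieces of work are missing. First, one must prove that the model sequence $h_*(j)$---the moments \eqref{e19} of the explicit function $\eta_*$ in \eqref{e8}---reproduces the leading term $(b_1+(-1)^jb_{-1})j^{-1}(\log j)^{-\alpha}$ with a remainder of the form $\wt g_1(j)+(-1)^j\wt g_{-1}(j)$ whose iterated differences obey \eqref{e10}; in the paper this is Lemma~\ref{lma.e3}, resting on the Laplace-integral asymptotics of Lemmas~\ref{L} and \ref{M} and on Lemma~\ref{lma.A2}, which converts derivative bounds into difference bounds. Second, even with this in hand, the total error is $f_1(j)+(-1)^jf_{-1}(j)$, and the oscillating factor $(-1)^j$ destroys the difference conditions: $(-1)^jf_{-1}(j)$ need not satisfy \eqref{ca7} even when $f_{-1}$ does. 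The paper removes the oscillation by conjugating with the flip $F$ of \eqref{a4a}, which preserves singular values, and only then applies Theorem~\ref{thm.a1} to $f_{-1}$ separately, obtaining \eqref{e14+}. Without these two steps the comparison of $\Gank(h)$ with the model operator is not justified for $\alpha\geq1/2$; the remaining parts of your outline (the reduction to $\Psi_*=\fb(X)\fs_*(D)\fb(X)$, the Weyl formula, the computation \eqref{d10a}, and the stability step, which the paper phrases as Lemma~\ref{lma.b1} rather than via Ky Fan inequalities) do match the paper's proof.
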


Theorem~\ref{thm.aa1} shows that between the cases $\gamma=1$ and $\gamma>1$ in 
\eqref{a1d} there is a whole scale (the logarithmic scale)
of sequences $h  (j) $ such that the eigenvalues of $\Gank(h)$ have 
power spectral asymptotics. 

The full version of this result (which is stated below as Theorem~\ref{cr.a3}) also allows for an error term in 
\eqref{a2}.

\subsection{Discussion}
(1) Theorem~\ref{thm.aa1} is consistent with the Hilbert-Schmidt conditions for $\Gank(h)$. 
Indeed, in the self-adjoint case we have
\begin{equation}
\sum_{j=0}^\infty |h(j)|^2 (j+1)
=
\sum_{n=1}^\infty \bigl(\lambda_n^+(\Gank)^2+\lambda_n^-(\Gank)^2\bigr), 
\quad
\Gank=\Gank(h),
\label{eq:HS}
\end{equation}
and the operator $\Gank$ belongs to the Hilbert-Schmidt class if and only if 
the series  in the l.h.s. of \eqref{eq:HS} converges. 
This is true  if
\begin{equation}
h(j)  = O(j^{-1} (\log j)^{-\alpha}), \quad j\to\infty,
\label{eq:HS1}
\end{equation}
for some $\alpha> 1/2$. On the other hand, the series in the r.h.s. of \eqref{eq:HS} converges if
\begin{equation}
\lambda_n^\pm(\Gank(h))
= O(n^{-\alpha}),   \quad  n\to\infty, 
\label{a1cx}
\end{equation}
with some $\alpha>1/2$. 
This agrees with Theorem~\ref{thm.aa1}.

(2)
It is shown in \cite{0} that, for $0<\alpha<1/2$,
condition \eqref{eq:HS1} implies \eqref{a1cx}. 
This result remains true also for $ \alpha\geq 1/2$ if additionally one imposes some conditions on the iterated differences of the sequence $h(j)$; 
see Section~\ref{sec.a20} for the precise statement. 
This is also consistent with Theorem~\ref{thm.aa1}.
In particular,  Theorem~\ref{thm.aa1} shows that the above result of \cite{0}  is sharp.

(3) 
According to formula    \eqref{a1c} the sequences 
\begin{equation}
h_1(j)= j^{-1}(\log j)^{-\alpha}
\quad\text{ and }\quad
h_{-1}(j)=(-1)^j j^{-1}(\log j)^{-\alpha}
\label{a4b}
\end{equation}
yield the same spectral  asymptotics. 
This fact has a simple explanation:
 if $h_1(j)$ and $h_{-1} (j)$ are any two sequences such that
$h_{-1}(j)=(-1)^j h_1(j)$, 
then we have
\begin{equation}
\Gank(h_{-1})=F^* \Gank(h_1) F 
\quad\text{ where }\quad (F u)(j)=(-1)^j u(j), \quad j\geq0. 
\label{a4a}
\end{equation}
Thus, the operators $\Gank(h_1)$ and $\Gank(h_{-1})$ are unitarily equivalent 
and so they have the same eigenvalues.

(4) 
Let us discuss the structure of the formula    \eqref{a1c}  for the asymptotic coefficient $c^\pm$. 
  In terms of the counting function, it can be equivalently rewritten as
$$
 n_\pm(\varepsilon; \Gank(h))
=
 v(\alpha)^{1/\alpha}\bigl((b_1)^{1/\alpha}_\pm+(b_{-1})^{1/\alpha}_\pm\bigr)  \varepsilon^{-1/\alpha}(1+o(1)), \quad \varepsilon\to 0.
 $$
It follows that (using notation \eqref{a4b})
\begin{equation}
n_\pm(\eps;\Gank(h))
=
n_\pm(\eps; b_1 \Gank(h_1))+n_\pm(\eps; b_{-1}\Gank(h_{-1}))+o(\eps^{-1/\alpha}), 
\quad
\eps\to0.
\label{a4d}
\end{equation}
Roughly speaking, this means that the operator $\Gank(h)$ is in some sense asymptotically equivalent 
to the orthogonal sum $b_1\Gank(h_1)\oplus b_{-1}\Gank(h_{-1})$. The ``asymptotic orthogonality'' of $\Gank(h_1)$ and $\Gank(h_{-1})$ may look mysterious here,
but it will become clearer in the course of constructing the model operators, see Remark~\ref{rmk.b2}.

%%%%%%%%%%%%%%%%%%%%%%%%%%%%%%%%%%%%%%%%%%
\subsection{Continuous case}\label{sec.a1a}
%%%%%%%%%%%%%%%%%%%%%%%%%%%%%%%%%%%%%%%%%%
In the discrete case, the spectral asymptotics of $\Gank(h)$ is determined by the asymptotic
behaviour of the sequence $h(j)$ as $j\to\infty$. 
In the continuous case, the behaviour of the kernel $\bh(t)$ for $t\to0$ and for $t\to\infty$
as well as the local singularities of $\bh (t)$  
contribute to the asymptotics of the eigenvalues   of the Hankel operator $\bGank(\bh)$.
In this paper, we consider the kernels without local singularites for $t>0$.

It is well known that the Carleman operator, corresponding to the kernel
$$
\bh(t)=1/t,
$$
is bounded, but not compact.
From here, similarly to \eqref{a1ff}, \eqref{a1f}, one easily obtains
$$
\bh(t)=O(1/t) \text{ as $t\to0$ and as $t\to\infty$ } 
\quad \Rightarrow \quad
\bGank(\bh) \text{ is bounded},
$$
$$
\bh(t)=o(1/t) \text{ as $t\to0$ and as $t\to\infty$ } 
\quad \Rightarrow \quad
\bGank(\bh) \text{ is compact.}
$$
All our kernels will satisfy the latter condition. 
In the same way as for the ``discrete"  Hankel operator, first we give the result in an important particular
case; the full statement is given below as Theorem~\ref{thm.d1}.
%%%%%%%%%%%%%%%%%%%
\begin{theorem}\label{thm.aa2}
%%%%%%%%%%%%%%%%%%%
Let $\bh$ be a real valued function in $C^\infty(\bbR_+)$ such that 
$$
\bh(t)=
\begin{cases}
\bb_0 t^{-1} (\log(1/t))^{-\alpha}, & t<1/a, 
\\
\bb_\infty t^{-1} (\log t)^{-\alpha}, & t>a, 
\end{cases}
$$
with some $\alpha>0$, $a>1$  and $\bb_0,\bb_\infty\in\bbR$. 
Then
\begin{equation}
\lambda_n^\pm(\bGank(\bh))
=
\bc^\pm n^{-\alpha}
+
o(n^{-\alpha}),
\quad
\bc^\pm
=
v(\alpha)\bigl((\bb_0)_\pm^{1/\alpha}+(\bb_\infty)_\pm^{1/\alpha}\bigr)^\alpha,
\label{d3}
\end{equation}
as $n\to\infty$,
where $v(\alpha)$ is given by \eqref{a4}. 
\end{theorem}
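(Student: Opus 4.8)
\emph{Strategy.} The plan is to run the model-operator scheme described in the introduction: build a $\Psi$DO $\Psi_*$ whose spectral asymptotics are accessible through the Birman--Solomyak theory \cite{BS4,BS3}, realize it as a Hankel operator with the same spectrum via the unitary equivalence of ingredient~(i), and close the argument with the singular value bounds of \cite{0}. First I would note that changing $\bh$ on a compact subset of $\bbR_+$ perturbs $\bGank(\bh)$ by a Hankel operator with a smooth, compactly supported kernel, hence with super-polynomially decaying singular values; so one may assume $\bh(t)=t^{-1}\phi(\log t)$ for all $t>0$, with $\phi\in C^\infty(\bbR)$ slowly varying, $\phi(x)\sim\bb_\infty|x|^{-\alpha}$ as $x\to+\infty$ and $\phi(x)\sim\bb_0|x|^{-\alpha}$ as $x\to-\infty$.

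\emph{Model operator and its asymptotics.} Under the unitary $(Wu)(x)=e^{x/2}u(e^x)$, $W\colon L^2(\bbR_+)\to L^2(\bbR)$, the Carleman operator (kernel $1/t$) becomes convolution with $(2\cosh(x/2))^{-1}$, whose Fourier transform is $\pi/\cosh(\pi\xi)$. More generally $W\bGank(\bh)W^*$ is well approximated, modulo an operator with singular values $o(n^{-\alpha})$, by the $\Psi$DO $\Psi_*$ on $L^2(\bbR)$ with (Weyl) amplitude of product form $a(x,\xi)=\phi(x)\,\pi/\cosh(\pi\xi)$ --- this is the continuous counterpart of the quasi-diagonalization of ingredient~(i). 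Since $\xi\mapsto(\cosh\pi\xi)^{-1/\alpha}$ is integrable and $\phi(x)=O(|x|^{-\alpha})$, the Birman--Solomyak formula applies:
\begin{equation*}
n_\pm(\varepsilon;\Psi_*)=\frac1{2\pi}\,\meas\bigl\{(x,\xi):\,(a(x,\xi))_\pm>\varepsilon\bigr\}(1+o(1)),\qquad \varepsilon\to0 .
\end{equation*}
The phase-space set splits over the half-planes $\{x>0\}$ and $\{x<0\}$, which contribute \emph{additively, with no interaction}; this is the transparent form of the ``asymptotic orthogonality'' of the two ends mentioned in the introduction and in Remark~\ref{rmk.b2}. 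On $\{x>0\}$ the condition reads $x<\bigl((\bb_\infty)_\pm\,\pi/(\varepsilon\cosh\pi\xi)\bigr)^{1/\alpha}$, giving the contribution $(\bb_\infty)_\pm^{1/\alpha}\varepsilon^{-1/\alpha}\int_\bbR(\pi/\cosh\pi\xi)^{1/\alpha}\,d\xi$, and symmetrically for $\bb_0$ on $\{x<0\}$. Using $\int_\bbR(\cosh\pi\xi)^{-1/\alpha}\,d\xi=\pi^{-1}B(\tfrac1{2\alpha},\tfrac12)$ and collecting the constants one gets $n_\pm(\varepsilon;\Psi_*)=v(\alpha)^{1/\alpha}\bigl((\bb_0)_\pm^{1/\alpha}+(\bb_\infty)_\pm^{1/\alpha}\bigr)\varepsilon^{-1/\alpha}(1+o(1))$, i.e.\ $\lambda_n^\pm(\Psi_*)=\bc^\pm n^{-\alpha}+o(n^{-\alpha})$ with $\bc^\pm$ as in \eqref{d3}.

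\emph{Closing the gap; the main obstacle.} Using ingredient~(i) one realizes $\Psi_*$ as a Hankel operator $\bGank_*=\bGank(\bh_*)$ with $\lambda_n^\pm(\bGank_*)=\lambda_n^\pm(\Psi_*)$ and with kernel $\bh_*$ having the \emph{same} leading behaviour as $\bh$ at $t\to0$ and at $t\to\infty$; then $\bGank(\bh)-\bGank_*=\bGank(\bh-\bh_*)$ is Hankel with kernel $o(t^{-1}(\log(1/t))^{-\alpha})$ as $t\to0$ and $o(t^{-1}(\log t)^{-\alpha})$ as $t\to\infty$, so \cite{0} gives $\lambda_n^\pm(\bGank(\bh)-\bGank_*)=o(n^{-\alpha})$, and the Weyl--Ky Fan inequalities for eigenvalues of a sum of compact self-adjoint operators produce $\lambda_n^\pm(\bGank(\bh))=\bc^\pm n^{-\alpha}+o(n^{-\alpha})$, which is \eqref{d3}. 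I expect the main obstacle to be precisely this matching step: constructing $\Psi_*$, together with its Hankel realization $\bGank(\bh_*)$, so that the product structure survives for the Birman--Solomyak computation while the commutator-type errors of the quasi-diagonalization and the difference $\bh-\bh_*$ are genuinely of lower order at both ends. This is where the quantitative Hankel singular value estimates of \cite{0}, in a form adapted to the logarithmic scale, are indispensable; the remaining steps are, by comparison, computational.
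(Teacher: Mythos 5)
Your overall architecture (model operator whose Weyl asymptotics are computed via Birman--Solomyak, plus singular-value estimates from \cite{0} and a stability lemma for the difference) is the paper's architecture, and your phase-space computation of the constant, including $\int_\bbR(\cosh\pi\xi)^{-1/\alpha}d\xi=\pi^{-1}B(\tfrac1{2\alpha},\tfrac12)$ and the resulting $v(\alpha)$, is correct. But the proposal has a genuine gap at exactly the point you flag as ``the main obstacle'': the model operator is never constructed. Your intermediate assertion that $W\bGank(\bh)W^*$ coincides, modulo an operator with singular values $o(n^{-\alpha})$, with the Weyl $\Psi$DO of amplitude $\phi(x)\pi/\cosh(\pi\xi)$ is not ``the continuous counterpart of ingredient~(i)'': Theorem~\ref{thm.d2} gives an \emph{exact} unitary equivalence, but only for kernels that are Laplace transforms of bounded functions, and it provides no approximation statement for a general kernel $t^{-1}\phi(\log t)$; making your heuristic (localization of $1/\cosh((x-y)/2)$ near the diagonal, $\log(e^x+e^y)\approx (x+y)/2$) rigorous with an $o(n^{-\alpha})$ error is precisely the work you have not done. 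The paper circumvents this entirely: it chooses an explicit $\sigma_*$ as in \eqref{d6}, takes $\bh_*=\calL\sigma_*$ so that Theorem~\ref{thm.d2} applies exactly, and then proves (Lemmas~\ref{L}, \ref{M}, \ref{lma.d4}, i.e.\ Watson-type asymptotics of Laplace integrals with logarithmic factors) that $\bh_*$ reproduces the prescribed behaviour at $t\to0$ and $t\to\infty$ up to errors gaining a full power of $\log$. Without this construction and matching, your argument is a plan, not a proof.

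A second, smaller but real, omission: in the closing step you invoke \cite{0} for $\bGank(\bh-\bh_*)$ using only the size bound $o(t^{-1}\jap{\log t}^{-\alpha})$ on the kernel. For $\alpha\geq1/2$, Theorem~\ref{thm.a2} requires in addition bounds on the derivatives $(\bh-\bh_*)^{(m)}$ up to order $M(\alpha)$; this is why the paper's Lemma~\ref{lma.d4} establishes the estimates \eqref{d8} for \emph{all} derivatives of the error term. In the smooth setting of Theorem~\ref{thm.aa2} these derivative bounds do hold for a correctly chosen $\bh_*$, but they must be stated and verified; as written, your appeal to \cite{0} is insufficient for $\alpha\geq1/2$. (Your preliminary reduction modifying $\bh$ on a compact subset, and the use of eigenvalue stability at the end, are both fine.)
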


In order to discuss the continuous case further, it is convenient to introduce some notation.
Let us
fix two cut-off functions  $\chi_0,\chi_\infty\in C^\infty(\bbR_+)$ 
such that  
\begin{equation}
\chi_0(t)=
\begin{cases}
1& \text{for $t\leq1/4$,}
\\
0& \text{for  $t\geq1/2$,}
\end{cases}
\qquad
\chi_\infty(t)=
\begin{cases}
0& \text{for $t\leq2$,}
\\
1& \text{for $t\geq4$,}
\end{cases}
\label{a7a}
\end{equation}
and define the model kernels 
\begin{equation}
\bh_0(t)=t^{-1}\abs{\log t}^{-\alpha}\chi_0(t),
\quad
\bh_\infty(t)=t^{-1}\abs{\log t}^{-\alpha}\chi_\infty(t),
\quad t>0.
\label{a7b}
\end{equation}
Observe that the hypothesis of Theorem~\ref{thm.aa2} is equivalent to the 
representation 
$$
\bh=\bb_0\bh_0+\bb_\infty \bh_\infty+\bg,
$$
where $\bg(t)$ is a smooth function that vanishes both for small and for large $t$.
We will see that the contribution of $\bGank(\bg)$ to the eigenvalue asymptotics
is negligible. 
Next, since the singularity of $\bh_0$ is located at zero and the singularity of $\bh_\infty$
is located at infinity, it is not surprising that the
operators $\bGank(\bh_0)$ and $\bGank(\bh_\infty)$ are ``asymptotically orthogonal'',
i.e.\ that, similarly to \eqref{a4d}, we have
$$
n_\pm(\eps;\bGank(h))
=
n_\pm(\eps; \bb_0\bGank(\bh_0))+n_\pm(\eps; \bb_\infty\bGank(\bh_\infty))+o(\eps^{-1/\alpha}), 
\quad
\eps\to 0.
$$
This explains the structure of formula \eqref{d3} for the asymptotic coefficient
$\bc^\pm$.

Compact Hankel operators $\bGank(\bh)$ with  kernels $\bh(t)$  
that have a singularity at a single point $t=t_0>0$ were  considered in 
\cite[Section 3]{GLP} and in \cite[Section 6]{Yafaev2}. 
In this case the eigenvalues $\lambda_{n}^\pm$ also have
  the power  asymptotics as $n\to \infty$, but the leading  terms of $\lambda_{n}^+$ and $\lambda_{n}^-$ are the same. 
In  the present paper we  consider locally regular kernels with a slow decay as $t\to \infty$ and singular at $t=0$. Thus the results as well as the methods  of 
\cite{GLP,Yafaev2} and those of the current paper are independent   and complement each other.

The results of this paper in the discrete and continuous cases are not fully independent of each other.
There are different ways of relating 
the operators  $\Gank(h)$ and $\bGank(\bh)$, e.g. via the Laguerre transform,
or by linking the corresponding symbols via a conformal change of variable
(see e.g. \cite[Section~1.8]{Peller}).  
Either of these methods shows that the  singularity (see \eqref{a7b}) of the kernel $\bh(t) $ at $t=0$ (resp., at $t=\infty$) corresponds to $h(j)$ 
 with asymptotics $j^{-1} (\log j)^{-\alpha}$ (resp., $(-1)^j j^{-1} (\log j)^{-\alpha}$) as $j\to \infty$.
However, technically it turns out to be more convenient to give two independent arguments
for the discrete and continuous cases. 
We also note that some features of the problem are more transparent in the discrete
case, while others are in the continuous case.

\subsection{The structure of the paper}
As already mentioned, our paper relies on a synthesis of various results. They are collected
in Section~\ref{sec.a2}. 
It is convenient to start the proofs with the continuous case. 
Thus, in Section~\ref{sec.d} we state and prove our main result in the continuous case,
and in Section~\ref{sec.e} we return to the discrete case. 
Finally,   a proof of an   assertion for $\Psi$DO in $L^2({\bbR})$ supplementing \cite{BS4} is given in the Appendix.

%%%%%%%%%%%%%%%%%%%%%%%%%%%%%%%%%%%%%%%%%%
%%%%%%%%%%%%%%%%%%%%%%%%%%%%%%%%%%%%%%%%%%
\section{Preliminaries}\label{sec.a2}
%%%%%%%%%%%%%%%%%%%%%%%%%%%%%%%%%%%%%%%%%%
%%%%%%%%%%%%%%%%%%%%%%%%%%%%%%%%%%%%%%%%%%
Here we discuss one by one the   three key ingredients 
of our approach mentioned in Section~1.1.
 
%%%%%%%%%%%%%%%%%%%%%%%%%%%%%%%%%%%%%%%%%%
\subsection{Reduction to $\Psi$DO}\label{sec.a2a}
%%%%%%%%%%%%%%%%%%%%%%%%%%%%%%%%%%%%%%%%%%
 
Let  $X$ and $D$ be self-adjoint operators in $L^2(\bbR)$ defined by 
\begin{equation}
(Xf)(x)=xf(x), \quad (Df)(x)=-if'(x).
\label{a8a}
\end{equation}
Denote
\begin{equation}
\fb(x):=(\pi/\cosh(\pi x))^{1/2}, \quad x\in\bbR.
\label{eq:Xx}
\end{equation}
This standard function  plays a distinguished  role in the theory of Hankel operators.
 
%%%%%%%%%%%%%%%%%%%%
\begin{theorem}\label{thm.d2}\cite[Theorem 4.3]{Yafaev3}
%%%%%%%%%%%%%%%%%%%%
Let $\sigma\in L^\infty(\bbR_+)$, and let $\bh$ be the Laplace 
transform of $\sigma :$ 
$$
\bh(t) =\int_0^\infty e^{-\lambda t}\sigma(\lambda)d\lambda = :  (\calL \sigma)(t),
\quad t>0.
$$
Then the Hankel operator $\bGank(\bh)$ in $L^2(\bbR_+)$ is unitarily equivalent 
to the $\Psi$DO 
\begin{equation}
\Psi=\fb(X)\fs (D)\fb(X)   
\label{a9b}
\end{equation}
in $L^2(\bbR) $ with
\begin{equation}
\fs(\xi)=\sigma(e^{-\xi}), 
\quad
\xi\in\bbR.
\label{eq:a9b}\end{equation}
\end{theorem}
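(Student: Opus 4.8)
The plan is to reduce $\bGank(\bh)$ to the form \eqref{a9b} by a chain of explicit unitary transformations. First I would substitute $\bh=\calL\sigma$ into \eqref{a5} and use Fubini's theorem to get the factorisation
\begin{equation*}
\bGank(\bh)=\calL^*M_\sigma\calL ,
\end{equation*}
where $M_\sigma$ denotes multiplication by $\sigma$ in $L^2(\bbR_+,d\lambda)$ and $\calL\colon L^2(\bbR_+,dt)\to L^2(\bbR_+,d\lambda)$ is the Laplace transform. Since $\calL^*\calL$ is the Carleman operator, $\calL$ is bounded (with norm $\sqrt\pi$), so this identity makes sense for every $\sigma\in L^\infty(\bbR_+)$.

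Next I would pass to ``logarithmic'' variables. Let $U\colon L^2(\bbR_+,dt)\to L^2(\bbR)$ and $V\colon L^2(\bbR_+,d\lambda)\to L^2(\bbR)$ be the unitaries $(Uf)(x)=e^{x/2}f(e^x)$ and $(Vg)(\xi)=e^{-\xi/2}g(e^{-\xi})$, corresponding to the substitutions $t=e^x$ and $\lambda=e^{-\xi}$. Then $VM_\sigma V^*$ is multiplication by $\sigma(e^{-\xi})=\fs(\xi)$, and a short computation shows that the kernel of $V\calL U^*$ depends only on the difference of its arguments; computing the associated symbol via the substitution $r=e^y$ and the identity $\int_0^\infty r^{z-1}e^{-r}\,dr=\Gamma(z)$ identifies $V\calL U^*$, up to the choice of Fourier normalisation, as the Fourier multiplier with symbol $\Gamma(\tfrac12+i\eta)$. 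Hence $U\bGank(\bh)U^*=\Gamma(\tfrac12-iD)\,\fs(X)\,\Gamma(\tfrac12+iD)$ on $L^2(\bbR)$, where $\fs(X)$ is multiplication by $\fs$ coming from $VM_\sigma V^*$.

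Then I would conjugate by the Fourier transform (and, if the convention demands it, by the parity operator $x\mapsto-x$) so as to interchange $X$ and $D$; this turns $\fs(X)$ into $\fs(D)$ and $\Gamma(\tfrac12\mp iD)$ into $\Gamma(\tfrac12\mp iX)$, giving $\Gamma(\tfrac12-iX)\,\fs(D)\,\Gamma(\tfrac12+iX)$. Finally, the reflection formula for the Gamma function yields $\abs{\Gamma(\tfrac12+ix)}^2=\pi/\cosh(\pi x)=\fb(x)^2$ (cf.\ \eqref{eq:Xx}), so I may write $\Gamma(\tfrac12\pm iX)=\fb(X)e^{\pm i\theta(X)}$ with a continuous real-valued phase $\theta$; conjugating by the unitary $e^{i\theta(X)}$, which commutes with $\fb(X)$, cancels both phase factors and leaves precisely $\fb(X)\fs(D)\fb(X)=\Psi$. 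Composing all the unitaries used above gives the asserted unitary equivalence, and since $\fs$ is given by \eqref{eq:a9b}, this is exactly the claim.

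The manipulations in the last two steps are routine Fourier analysis; the part that needs genuine care is the first step --- justifying the factorisation $\bGank(\bh)=\calL^*M_\sigma\calL$ (Fubini together with a density argument, e.g.\ from compactly supported $\sigma$) and the boundedness of the individual factors for merely bounded $\sigma$, equivalently that $\Gamma(\tfrac12\pm iD)$ is bounded, which follows from $\abs{\Gamma(\tfrac12+i\eta)}^2=\pi/\cosh(\pi\eta)\leq\pi$. The remaining delicate point is pure bookkeeping: tracking the Fourier conventions (signs and $2\pi$ factors) carefully enough that the symbol in \eqref{eq:a9b} comes out exactly as $\fs(\xi)=\sigma(e^{-\xi})$ and not as a rescaled or reflected variant.
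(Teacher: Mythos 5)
Your argument is correct: the factorisation $\bGank(\calL\sigma)=\calL M_\sigma\calL$ (with $\calL$ self-adjoint of norm $\sqrt\pi$), the passage to logarithmic variables turning $\calL$ into the Fourier multiplier $\Gamma(\tfrac12\pm i\,\cdot\,)$, the exchange of $X$ and $D$, and the removal of the unimodular phase of $\Gamma(\tfrac12+ix)$ using $\abs{\Gamma(\tfrac12+ix)}^2=\pi/\cosh(\pi x)=\fb(x)^2$ together give exactly $\fb(X)\fs(D)\fb(X)$ with $\fs(\xi)=\sigma(e^{-\xi})$. The paper itself does not prove this statement but imports it from \cite{Yafaev3}, remarking that the equivalence is given essentially by the Mellin transform; your proof is precisely that route, so it matches the intended argument.
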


The unitary equivalence of the operators $\bGank(\bh)$ and $\Psi$
is given essentially by the Mellin transform. In \cite{Yafaev2}, the function $\fs(\xi)$ is called the sign-function of the kernel $\bh (t)$ since it determines the sign of the corresponding Hankel operator $\bGank(\bh)$.

In the discrete case, the role of the Laplace transform of $\sigma (\lambda)$ is played by the sequence of moments of a function $\eta(\mu)$ defined on the interval $(-1,1)$.
Similarly to Theorem~\ref{thm.d2}, we have

%%%%%%%%%%%%%%%%%%%%
\begin{theorem}\label{thm.e2}\cite[Theorem 7.7]{Yafaev3}
%%%%%%%%%%%%%%%%%%%%
Let $\eta\in L^\infty(-1,1)$, 
and let $h(j)$ be the sequence of moments of $\eta$:
$$
h(j)=\int_{-1}^1 \eta(\mu)\mu^j d\mu, 
\quad
j=0,1,2,\dots.
$$
Then the Hankel operator $\Gank(h)$ in $\ell^2(\bbZ_+)$ is unitarily equivalent to the $\Psi$DO 
\eqref{a9b} in $L^2(\bbR) $ with
$$
\fs(\xi)=\eta\left(\frac{2e^{-\xi}-1}{2e^{-\xi}+1}\right), 
\quad \xi\in\bbR.
$$
\end{theorem}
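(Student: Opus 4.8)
The plan is to reduce this statement to its continuous-case counterpart, Theorem~\ref{thm.d2}, using the Laguerre functions as a bridge between $\ell^2(\bbZ_+)$ and $L^2(\bbR_+)$; this parallels the way Theorem~\ref{thm.d2} itself is obtained via the Mellin transform.

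The starting point is the factorization that comes with the moment representation. Let $\calY\colon\ell^2(\bbZ_+)\to L^2(-1,1)$ (with Lebesgue measure $d\mu$) be the operator $(\calY u)(\mu)=\sum_{k\ge0}u(k)\mu^k$; it is bounded because $\calY^{*}\calY$ is the Hankel matrix with entries $\int_{-1}^1\mu^{j+k}\,d\mu$, which is a sum of the Hilbert matrix and its sign-twisted version, cf.\ \eqref{a4a}. A direct computation from $h(j)=\int_{-1}^1\eta(\mu)\mu^j\,d\mu$ gives
\begin{equation*}
\Gank(h)=\calY^{*}M_\eta\calY ,
\end{equation*}
where $M_\eta$ is multiplication by $\eta$ on $L^2(-1,1)$. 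This is the precise analogue of the elementary identity $\bGank(\calL\sigma)=\calL M_\sigma\calL$ used in the continuous case, and it reduces everything to understanding the single operator $\calY$.

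Now I would bring in the Laguerre functions $\ell_k(\lambda)=L_k(\lambda)e^{-\lambda/2}$, where $L_k$ is the $k$-th Laguerre polynomial. Since $\{\ell_k\}_{k\ge0}$ is an orthonormal basis of $L^2(\bbR_+)$, the map $\calZ\colon\ell^2(\bbZ_+)\to L^2(\bbR_+)$ with $\calZ\delta_k=\ell_k$ is unitary; let $\calQ\colon L^2(-1,1)\to L^2(\bbR_+)$ be the unitary change of variable corresponding to $\mu=\tfrac{2t-1}{2t+1}$, $t>0$. The classical Laplace-transform formula $(\calL\ell_k)(t)=(t-\tfrac12)^k(t+\tfrac12)^{-k-1}=(1-\mu)\mu^k$ (with $\mu=\tfrac{2t-1}{2t+1}$), combined with the fact that applying $\calQ^{*}$ multiplies by the compensating Jacobian factor $(1-\mu)^{-1}$, yields the key identity $\calY=\calQ^{*}\calL\calZ$. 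Since $\calL$ is bounded and symmetric on $L^2(\bbR_+)$, this gives
\begin{equation*}
\calZ\,\Gank(h)\,\calZ^{*}=\calL\,(\calQ M_\eta\calQ^{*})\,\calL=\calL M_\sigma\calL=\bGank(\bh),
\qquad
\sigma(t)=\eta\Bigl(\tfrac{2t-1}{2t+1}\Bigr),\quad \bh=\calL\sigma ,
\end{equation*}
so $\Gank(h)$ is unitarily equivalent to an integral Hankel operator of the type covered by Theorem~\ref{thm.d2}.

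Applying Theorem~\ref{thm.d2} to $\bh=\calL\sigma$ finishes the argument: its sign-function is $\fs(\xi)=\sigma(e^{-\xi})=\eta\bigl(\tfrac{2e^{-\xi}-1}{2e^{-\xi}+1}\bigr)$, hence $\bGank(\bh)$, and therefore $\Gank(h)$, is unitarily equivalent to $\fb(X)\fs(D)\fb(X)$. The step I expect to be the real work is the identity $\calY=\calQ^{*}\calL\calZ$: although it rests only on the explicit Laplace transform of the Laguerre functions, one has to track all the measure normalizations (unitarity of $\calQ$ and $\calZ$ with respect to Lebesgue measures, the normalization of $\ell_k$) so that the Jacobian factors cancel exactly and the fractional--linear substitution emerges precisely as $\tfrac{2t-1}{2t+1}$. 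An alternative, more computational route that avoids Theorem~\ref{thm.d2} is to diagonalize $\calY$ directly by a ``discrete Mellin transform'', i.e.\ to show that applying to the monomials $\mu^k$ the substitution $\mu=\tfrac{2e^{-\xi}-1}{2e^{-\xi}+1}$, then the Fourier transform, then division by $\fb$, produces an orthonormal basis of $L^2(\bbR)$ --- a classical orthogonality relation for Legendre-type functions.
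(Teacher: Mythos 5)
Your argument is correct, but note that the paper itself contains no proof of Theorem~\ref{thm.e2}: it is imported verbatim from \cite[Theorem 7.7]{Yafaev3}, just as Theorem~\ref{thm.d2} is imported from \cite[Theorem 4.3]{Yafaev3}, and the authors deliberately keep the discrete and continuous cases as two independent ingredients (Section~1.8 of \cite{Peller} and the Laguerre link are mentioned in the paper's Section~1.4 precisely as the route they choose \emph{not} to follow). So what you have done is genuinely different: you make the discrete statement a corollary of the continuous one. The details you flagged as ``the real work'' do check out: the identity $\Gank(h)=\calY^*M_\eta\calY$ holds on finitely supported sequences and $\calY$ is bounded because $\calY^*\calY$ has entries $\bigl(1+(-1)^{j+k}\bigr)(j+k+1)^{-1}$, a sum of the Hilbert matrix and its twist \eqref{a4a}; with $\mu(t)=\frac{2t-1}{2t+1}$ one has $\mu'(t)=(1-\mu(t))^2$, so the unitary $\calQ$ acts as $(\calQ f)(t)=(1-\mu(t))\,f(\mu(t))$, and since $(\calL\ell_k)(t)=\int_0^\infty L_k(\lambda)e^{-(t+\frac12)\lambda}d\lambda=(t-\tfrac12)^k(t+\tfrac12)^{-k-1}=(1-\mu(t))\mu(t)^k$, the identity $\calY=\calQ^*\calL\calZ$ holds on the basis $\{\delta_k\}$ and extends by boundedness; moreover $\calQ M_\eta\calQ^*=M_\sigma$ with $\sigma(\lambda)=\eta(\mu(\lambda))\in L^\infty(\bbR_+)$, so $\calZ\Gank(h)\calZ^*=\calL M_\sigma\calL=\bGank(\calL\sigma)$ and Theorem~\ref{thm.d2} gives the symbol $\fs(\xi)=\sigma(e^{-\xi})=\eta\bigl(\frac{2e^{-\xi}-1}{2e^{-\xi}+1}\bigr)$, exactly as claimed. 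What your route buys is a short, self-contained deduction of Theorem~\ref{thm.e2} from Theorem~\ref{thm.d2} using only classical Laguerre facts, making transparent why the symbols in the two theorems are related by the conformal substitution $\lambda\mapsto\frac{2\lambda-1}{2\lambda+1}$; what the paper's choice (two independent citations) buys is independence of the two cases, so that neither rests on the boundedness/normalization bookkeeping for $\calY$, $\calQ$, $\calZ$ that your reduction requires.
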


\begin{remark}
A similar statement, but requiring $\eta\geq0$, was proven earlier by Widom in \cite{Widom}.
Widom establishes the unitary equivalence of $\Gank(h)$ to 
\begin{equation}
\fs(D)^{1/2}\fb(X)^2 \fs(D)^{1/2}.
\label{d5}
\end{equation}
Since for any bounded operator $T$, the non-zero parts of the operators $T^*T$ and $TT^*$ 
are unitarily equivalent, taking $T=\fs(D)^{1/2}\fb(X)$, we see that Widom's result
is essentially equivalent to  Theorem~\ref{thm.e2}, if $\fs\geq0$. 
We note that the study of spectral asymptotics of $\Gank(h)$ in \cite{Widom} 
also relies on the reduction to the $\Psi$DO \eqref{d5}.
\end{remark}

%%%%%%%%%%%%%%%%%%%%%%%%%%%%%%%%%%%%%%%%%%
\subsection{Weyl asymptotics of $\Psi$DO}\label{sec.a2b}
%%%%%%%%%%%%%%%%%%%%%%%%%%%%%%%%%%%%%%%%%%

 We need the following result.
 
%%%%%%%%%%%%%%%%%%%%
\begin{theorem}\label{thm.b2}
%%%%%%%%%%%%%%%%%%%%
Let $a\in C^\infty ({\bbR})$ be a  real-valued 
function such that
\begin{equation}
a(\xi)=
\begin{cases}
A(+\infty)\xi^{-\alpha}(1+o(1)), &
\xi\to\infty,
\\
A(-\infty)\abs{\xi}^{-\alpha}(1+o(1)), &
\xi\to-\infty,
\end{cases}
\label{b2}
\end{equation}
for some   $\alpha>0$ and some constants  $A(+\infty)$ and $A(-\infty)$.
Assume that $b(x)=\overline{b(x)}$ and
\begin{equation}
\abs{b(x)}\leq C\jap{x}^{-\rho}, 
\quad 
x\in\bbR,
\label{b1a}
\end{equation}
for some $\rho>\alpha/2$. 
Then for the pseudodifferential operator $\Psi=b(X)a(D)b(X)$ in $L^2(\bbR)$ one has 
\begin{equation}
\lambda_n^\pm(\Psi)
=
C^{\pm} n^{-\alpha}+o(n^{-\alpha}), 
\quad 
n\to\infty,
\label{b4}
\end{equation}
where the coefficients $C^\pm$ are given by 
\begin{equation}
C^{\pm}
=
(2\pi)^{-\alpha}
\bigl(
A(-\infty)_\pm^{1/\alpha}+A(+\infty)_\pm^{1/\alpha}
\bigr)^\alpha
\biggl(
\int_\bbR \abs{b(x)}^{2/\alpha}dx\biggr)^\alpha.
\label{b5}
\end{equation}
\end{theorem}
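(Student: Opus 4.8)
The plan is to derive Theorem~\ref{thm.b2} from the Weyl-type spectral asymptotics for pseudodifferential operators of negative order due to Birman and Solomyak \cite{BS3,BS4} (in the form recorded in the Appendix), after reducing the general symbols $a$, $b$ to model ones. Throughout I would work with the counting functions \eqref{eq:CF}, observing that the conclusion \eqref{b4}--\eqref{b5} is equivalent, by monotone inversion, to
\[
n_\pm(\eps;\Psi)=\frac1{2\pi}\bigl(A(+\infty)_\pm^{1/\alpha}+A(-\infty)_\pm^{1/\alpha}\bigr)\Bigl(\int_\bbR\abs{b(x)}^{2/\alpha}dx\Bigr)\eps^{-1/\alpha}(1+o(1)),\quad\eps\to0;
\]
here the hypothesis $\rho>\alpha/2$ is exactly what makes $\abs b^{2/\alpha}$ integrable. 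Two standard tools organise the reduction: the Ky Fan--type bound $n_\pm(\eps;\Psi'+\Psi'')\le n_\pm((1-\delta)\eps;\Psi')+n_\pm(\delta\eps;\Psi'')$ and its symmetric counterpart, which show that perturbing $\Psi$ by a self-adjoint operator with eigenvalues $o(n^{-\alpha})$ leaves the leading term of \eqref{b4} unchanged; and the trace-ideal estimates for operators $b(X)c(D)$ --- the Hilbert--Schmidt identity $\norm{b(X)c(D)}_{\calS_2}^2=(2\pi)^{-1}\norm b_{L^2}^2\norm c_{L^2}^2$, the bound $\norm{b(X)c(D)}_{\calS_p}\lesssim\norm b_{L^p}\norm c_{L^p}$ for $p>2$, and the weak-Schatten estimates adapted to symbols decaying like $\abs\xi^{-\alpha}$ from \cite{0}.

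Next I would fix a real-valued $a_0\in C^\infty(\bbR)$ equal to $A(+\infty)\xi^{-\alpha}$ for $\xi$ large, to $A(-\infty)\abs\xi^{-\alpha}$ for $-\xi$ large, and smooth and bounded near $\xi=0$. By \eqref{b2} the remainder $r:=a-a_0$ satisfies $r(\xi)=o(\abs\xi^{-\alpha})$, so splitting $r$ over dyadic $\xi$-shells and summing the trace-ideal bounds above shows that $b(X)r(D)b(X)$ lies in the separable part of the weak ideal $\calS_{1/\alpha,\infty}$; by the Ky Fan inequalities it therefore suffices to treat $b(X)a_0(D)b(X)$. Similarly, writing $b=b_R+\wt b_R$ with $b_R:=b\1_{\{\abs x\le R\}}$ and using $\int_{\abs x>R}\abs b^{2/\alpha}dx\to0$, the difference $b(X)a_0(D)b(X)-b_R(X)a_0(D)b_R(X)$ has weak-Schatten $\calS_{1/\alpha,\infty}$-quasinorm tending to $0$ as $R\to\infty$. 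Hence it is enough to prove the asymptotics for the model operator $\Psi_0:=b_R(X)a_0(D)b_R(X)$ and let $R\to\infty$ afterwards.

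The operator $\Psi_0$ has leading symbol $\abs{b_R(x)}^2a_0(\xi)$, of order $-\alpha$ in $\xi$ and compactly supported in $x$, so the Birman--Solomyak theorem (Appendix) applies and gives
\[
n_\pm(\eps;\Psi_0)=\frac1{2\pi}\,\meas\bigl\{(x,\xi)\in\bbR^2:\pm\abs{b_R(x)}^2a_0(\xi)>\eps\bigr\}(1+o(1)),\quad\eps\to0.
\]
Computing this phase-space volume is elementary: for fixed $x$ the set $\{\xi:\pm\abs{b_R(x)}^2a_0(\xi)>\eps\}$ consists (for large $\abs\xi$) of two intervals of total length $\eps^{-1/\alpha}\abs{b_R(x)}^{2/\alpha}\bigl(A(+\infty)_\pm^{1/\alpha}+A(-\infty)_\pm^{1/\alpha}\bigr)(1+o(1))$ plus a bounded set near $\xi=0$, and integrating in $x$ reproduces the displayed asymptotics of the first paragraph with $b$ replaced by $b_R$; letting $R\to\infty$ finishes the proof.

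I expect the perturbation/truncation step of the second paragraph to be the main obstacle, and it is precisely there that $\rho>\alpha/2$ enters: when $\alpha>1/2$ the relevant exponent $1/\alpha$ is below $2$, the clean Hilbert--Schmidt bound no longer suffices, and one must rely on the sharper weak-Schatten estimates for $b(X)c(D)$ from \cite{0}, combined with careful dyadic summation in $\xi$ and a control of the $x$-tail uniform in $R$, to be sure that none of these corrections affects the leading term. Once $\Psi_0$ has been isolated, the remaining ingredients --- the Weyl asymptotics from \cite{BS3,BS4} and the Appendix, and the volume computation --- are routine.
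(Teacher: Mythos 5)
Your proposal is correct and follows essentially the same route as the paper's Appendix: take the Birman--Solomyak asymptotics for compactly supported $b$ as known, truncate $b$ to a compactly supported piece, control the truncation (and symbol) errors in the weak class $\Sch_{1/\alpha,\infty}$ via Cwikel-type estimates for operators of the form $f(X)g(D)$, and pass to the limit using the continuity of the eigenvalue-asymptotics functionals (your Ky Fan inequalities play the role of the paper's estimates \eqref{C5}--\eqref{C6}). The only deviations are cosmetic: your preliminary replacement of $a$ by an exactly power-like model $a_0$ is unnecessary (the Birman--Solomyak result already covers asymptotically power-like symbols with compactly supported $b$), and the weak-Schatten estimates you invoke are those of Cwikel and Birman--Karadzhov--Solomyak \cite{C,BKS}, i.e.\ Theorem~\ref{thm.C1}, rather than results of \cite{0}.
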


\begin{remark}
The asymptotic relations \eqref{b4}, \eqref{b5} can be equivalently rewritten 
in terms of the eigenvalue counting functions as
\begin{equation}
\lim_{\varepsilon\to0} \varepsilon^{1/\alpha} \#\{n: \lambda_n^\pm(\Psi)>\varepsilon\}
=
\frac1{2\pi}\lim_{\varepsilon\to0}\varepsilon^{1/\alpha}
\meas\{(x,\xi)\in\bbR^2: \pm a(\xi)b(x)^2>\varepsilon\}, 
\label{a12a}
\end{equation}
which is 
 the Weyl semiclassical formula. 
\end{remark}

\begin{remark}\label{rmk.b2}
Fix some $b$ satisfying the estimate \eqref{b1a}; 
let $\Psi_+$ correspond to some function $a$ with $A(+\infty)=1$, $A(-\infty)=0$ and 
let $\Psi_-$ correspond to the case $A(+\infty)=0$, $A(-\infty)=1$. Then, for a general
$\Psi$ as in  Theorem~\ref{thm.b2}, we can write
$$
\Psi=A(+\infty)\Psi_++A(-\infty)\Psi_-+\text{error term},
$$
and \eqref{b4}, \eqref{b5} mean that the operators $\Psi_+$, $\Psi_-$ are 
``asymptotically orthogonal'', i.e. 
$$
n_\pm(\eps;\Psi)
=
n_\pm(\eps; A(+\infty)\Psi_+)+n_\pm(\eps; A(-\infty)\Psi_-)+o(\eps^{-1/\alpha}), 
\quad
\eps\to+0.
$$
In the context of the Weyl formula \eqref{a12a}, this asymptotic orthogonality 
does not look very surprising as it corresponds to the symbols of the 
operators $\Psi_+$ and  $\Psi_-$ ``living'' in different parts of the phase space.  
\end{remark}

For compactly supported $b$, 
Theorem~\ref{thm.b2}    
was proven in \cite{BS4} where the multi-dimensional case was considered. 
Extension to arbitrary functions $b$ satisfying \eqref{b1a} is an easy application of
Cwikel type estimates for $\Psi$DO of the type $f(X)g(D)$; 
for completeness we give the proof in the Appendix. We also note that there was  an inessential restriction $\alpha\not\in{\bbZ}_{+}$ in \cite{BS4}. It 
  appeared   only because $\Psi$ was regarded in \cite{BS4} as an integral
operator rather than a $\Psi$DO.

Theorem~\ref{thm.b2} concerns a very special class of $\Psi$DO
with factorisable amplitudes. 
For general $\Psi$DO with amplitudes asymptotically homogeneous at 
infinity, Weyl type formula for the asymptotics of the  spectrum was obtained in \cite{BS3}.

  %%%%%%%%%%%%%%%%%%%%%%%%%%%%%%%%%%%%%%%%%%
\subsection{Spectral estimates}\label{sec.a20}
%%%%%%%%%%%%%%%%%%%%%%%%%%%%%%%%%%%%%%%%%%

Let us start with the discrete case when Hankel operators  are defined by formula  
\eqref{eq:a5}  in the space ${\ell}^2({\bbZ}_{+})$. 
Now we do not assume that the operators $\Gank$ are self-adjoint. 
We denote by $\{s_n(\Gank)\}_{n=1}^\infty$ the non-increasing sequence
of singular values of $\Gank$, i.e. $s_n(\Gank)=\lambda_n^+(\sqrt{\Gank^*\Gank})$. 

Here we discuss spectral estimates for Hankel operators   corresponding to 
the sequences $g (j)$ that satisfy 
$$
g(j)=o(j^{-1}(\log j)^{-\alpha}), \quad j\to\infty,
$$
for some $\alpha>0$.  
We   also need some  assumptions on iterated differences $g^{(m)} (j)$. 
These are the sequences defined iteratively by setting $g^{(0)} (j)=g  (j)$ 
and 
$$
g^{(m)}(j)=g^{(m-1)}(j+1)-g^{(m-1)}(j), \quad j\geq0.
$$
Let
\begin{equation}
M(\alpha)=
\begin{cases}
[\alpha]+1& \text{ if } \alpha\geq1/2,
\\
0, & \text{ if } \alpha<1/2,
\end{cases}
\label{c5}
\end{equation}
where $[\alpha]=\max\{ m\in\bbZ_{+}: m\leq \alpha\}$ is the integer part of $\alpha$. 
We impose conditions on $M(\alpha)$ iterated differences of $g(j)$.

%%%%%%%%%%%%%%%%%%
\begin{theorem}\label{thm.a1}\cite{0}
%%%%%%%%%%%%%%%%%%
Let $\alpha>0$, and let $M=M(\alpha)$ be as in \eqref{c5}.
Let $g (j)$ be a sequence of complex numbers that satisfies 
\begin{equation}
  g^{(m)}(j)
=
o(j^{-1-m}(\log j)^{-\alpha}), \quad j\to\infty,
\label{ca7}
\end{equation}
for all $m=0,\dots,M$. Then 
$$
s_n(\Gank(g))=o(n^{-\alpha}), \quad n\to\infty.
$$
\end{theorem}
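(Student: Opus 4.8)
The plan is to reduce the estimate on $s_n(\Gank(g))$ to Cwikel-type bounds for pseudodifferential operators, via the reduction of Section~\ref{sec.a2a}. However, since Theorem~\ref{thm.a1} is here quoted from \cite{0} and the conditions \eqref{ca7} are stated directly on the iterated differences $g^{(m)}(j)$, the cleanest self-contained route is the one used in \cite{0}: interpolate between two endpoint estimates. First I would record the elementary bound $s_n(\Gank(g))\leq C\norm{g}_{\ell^1}$ (indeed $\norm{\Gank(g)}\leq\norm{g}_{\ell^1}$), and, at the other end, a Hilbert--Schmidt estimate $s_n(\Gank(g))\leq Cn^{-1/2}\bigl(\sum_j (j+1)\abs{g(j)}^2\bigr)^{1/2}$, which follows from \eqref{eq:HS}. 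These are the ``order $0$'' and ``order $1/2$'' facts; the role of the iterated differences is to upgrade these to the sharp power $n^{-\alpha}$ for $\alpha\geq1/2$.

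The key device is the standard decomposition of a Hankel matrix into dyadic blocks: write $\Gank(g)=\sum_{k\geq0}\Gank_k$, where $\Gank_k$ is the compression of $\Gank(g)$ to the matrix entries with $j+k$ in a dyadic interval $[2^k,2^{k+1})$. Each $\Gank_k$ is (a piece of) a Hankel matrix with $O(2^k)$ relevant entries, so it has rank $O(2^k)$, and by summation of the $\Gank_k$ one controls $s_n(\Gank(g))$ through $\sum_k$ of the block norms with appropriate counting of ranks. For the norm of each block one uses a Bernstein/Peller-type inequality: the operator norm of a finite Hankel block of size $\sim 2^k$ built from a sequence that has $M$ controlled differences is bounded by a weighted $\ell^1$-type quantity in the differences on that scale, which under \eqref{ca7} is $o(2^{-k\alpha}(\log 2^k)^{-\alpha})=o(2^{-k\alpha}k^{-\alpha})$. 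Taking $M=M(\alpha)$ differences is exactly what is needed to make this block estimate hold with the correct power $\alpha$: each difference gains one power of $2^{-k}$, and $[\alpha]+1$ differences suffice to reach (and slightly exceed) the threshold, the borderline logarithmic factor being absorbed by the $o(\cdot)$.

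Assembling: from rank$(\Gank_k)=O(2^k)$ and $\norm{\Gank_k}=o(2^{-k\alpha}k^{-\alpha})$ one gets, by the standard lemma $s_{m+n}(A+B)\leq s_m(A)+s_n(B)$ together with $s_n(\Gank_k)\leq\norm{\Gank_k}$ for $n\leq C2^k$ and $s_n(\Gank_k)=0$ for $n>C2^k$, that $s_n(\Gank(g))=o(n^{-\alpha})$; concretely, for $n\sim 2^K$ one sums the tail $\sum_{k\geq K}\norm{\Gank_k}=o\bigl(\sum_{k\geq K}2^{-k\alpha}k^{-\alpha}\bigr)=o(2^{-K\alpha}K^{-\alpha})$ and estimates the contribution of the low blocks $k<K$ by distributing the remaining $\sim n$ indices, using the Hilbert--Schmidt (or rank) bound, so that their joint contribution to $s_n$ is also $o(n^{-\alpha})$. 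I expect the main obstacle to be the block norm estimate: proving that a truncated Hankel matrix on a dyadic scale, generated by a sequence whose first $M(\alpha)$ differences obey \eqref{ca7}, has norm $o(2^{-k\alpha}k^{-\alpha})$ — this is where the theory of Besov-space descriptions of Hankel operators (Peller's theorem) and summation by parts $M$ times enter, and it is the technical heart carried out in \cite{0}.
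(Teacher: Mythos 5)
You are proving a statement that this paper does not prove at all: Theorem~\ref{thm.a1} is imported verbatim from \cite{0} and used as a black box, so the only thing to assess is whether your sketch is sound — and it is not. The fatal step is the block norm claim. Write $\Gank_k=\Gank(g\mathbbm{1}_{[2^k,2^{k+1})})$ for your dyadic piece; you assert that the difference conditions \eqref{ca7} force $\norm{\Gank_k}=o(2^{-k\alpha}k^{-\alpha})$, ``each difference gaining a power of $2^{-k}$''. This is false, and no smoothness of $g$ can make it true. Take $g(j)=j^{-1}(\log j)^{-\alpha}(\log\log j)^{-1}$, which satisfies \eqref{ca7} for every $m$. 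Testing $\Gank_k$ on the characteristic function $u_k$ of $\{j:2^{k-1}\le j<2^k\}$ (so that $j+l$ always lies in the block) gives $\norm{\Gank_k}\ge\langle\Gank_k u_k,u_k\rangle/\norm{u_k}^2\asymp 2^{2k}\cdot 2^{-k}k^{-\alpha}(\log k)^{-1}/2^{k}\asymp k^{-\alpha}(\log k)^{-1}$, which is enormously larger than $2^{-k\alpha}k^{-\alpha}$. Equivalently, by Nehari the block norm is comparable to the sup of the block symbol $\sum_{j\in[2^k,2^{k+1})}g(j)e^{ij\theta}$, which at $\theta=0$ equals the block sum $\asymp k^{-\alpha}(\log k)^{-1}$; summation by parts cannot beat this positivity lower bound, so taking $M(\alpha)$ differences buys nothing at the level of $\norm{\Gank_k}$.

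Moreover, even after correcting the block norms to their true size $o(k^{-\alpha})$, your assembly scheme cannot close: the rank bound handles the blocks $k<K$ (total rank $O(2^K)\sim O(n)$), but the tail contributes $s_{Cn}(\Gank(g))\le\sum_{k>K}\norm{\Gank_k}$, which diverges for $\alpha\le1$ and is only $O(K^{1-\alpha})=O((\log n)^{1-\alpha})$ for $\alpha>1$ — nowhere near $n^{-\alpha}$. The point is structural: each dyadic block is essentially a scaled compression of the Hilbert matrix, with norm $\asymp k^{-\alpha}$ and rank $\asymp 2^k$, and the power decay $s_n=o(n^{-\alpha})$ comes from how few singular values of each block exceed a given threshold; a (rank, operator norm) pair per block cannot detect this. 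Likewise, interpolation between your two endpoints ($\norm{g}_{\ell^1}$ and the weighted Hilbert--Schmidt bound) can never produce a power better than $n^{-1/2}$, which is exactly why \cite{0} needs genuinely different machinery (Peller-type weak Schatten class estimates for Hankel operators) for $\alpha\ge1/2$, and why the present paper simply cites that result rather than reproving it.
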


\begin{remark}
\begin{enumerate} 
\item
If instead of \eqref{ca7}, we have
\begin{equation}
  g^{(m)}(j)
=
O (j^{-1-m}(\log j)^{-\alpha}), \quad j\to\infty,
\label{ca7x}
\end{equation}
then
$$
s_n(\Gank(g))=O(n^{-\alpha}), \quad n\to\infty.
$$
\item
For the sequence defined by $g(j)=j^{-1}(\log j)^{-\alpha}$, $j\geq2$, condition \eqref{ca7x} is satisfied for all $m$.
\item
For $\alpha\geq1/2$, our choice of $M(\alpha)$ is probably not optimal, but 
it is not far from being so. 
Example~4.7  in \cite{0} shows that for $\alpha \geq 2$ 
one cannot take $M(\alpha)= [\alpha]-2$ in this theorem.
\end{enumerate}
\end{remark}

Let us give
the analogue of Theorem~\ref{thm.a1} in the continuous case, 
that is, for the operators $\bGank=\bGank(\bg)$ 
defined by formula \eqref{a5} in the space $L^2 ({\bbR}_{+})$.
We use the notation $\jap{x}=(\abs{x}^2+1)^{1/2}$. 
Similarly to the discrete case, for $\alpha<1/2$ we only need an 
assumption on $\abs{\bg(t)}$; for $\alpha\geq1/2$ we also need
assumptions on the derivatives $\bg^{(m)}(t)$. 

%%%%%%%%%%%%%%%%%%
\begin{theorem}\label{thm.a2}\cite{0}
%%%%%%%%%%%%%%%%%%
Let $\alpha>0$ and  let $M=M(\alpha)$ be the integer given by \eqref{c5}.
Let $\bg$ be a complex valued function,
$\bg\in L^\infty_\loc(\bbR_+)$; if $\alpha\geq1/2$, suppose also 
that $\bg\in C^M(\bbR_+)$.
Assume that $\bg$ satisfies
$$
\bg^{(m)}(t)=o(t^{-1-m}\jap{\log t}^{-\alpha})
\quad 
\text{ as $t\to0$ and as $t\to\infty$.}
$$
Then 
$$
s_n(\bGank(\bg))=o(n^{-\alpha}), \quad n\to\infty.
$$
\end{theorem}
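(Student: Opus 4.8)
The natural strategy is to transfer the problem to a pseudodifferential operator via Theorem~\ref{thm.d2} and then deduce the singular-value bound from a Cwikel-type estimate. However, the kernel $\bg(t)$ here need not be a Laplace transform of a bounded function, so a direct application of Theorem~\ref{thm.d2} is not available. Instead I would reduce everything to the discrete case via a transform relating $L^2(\bbR_+)$ and $\ell^2(\bbZ_+)$ — e.g.\ the map associated with the substitution $t = e^{-x}$ combined with a dyadic decomposition, or the Laguerre transform mentioned in Section~\ref{sec.a1a} — so that Theorem~\ref{thm.a1} can be invoked. Concretely, the plan is: first split $\bg = \bg_0 + \bg_1$ with $\bg_0$ supported in $(0,1]$ and $\bg_1$ supported in $[1,\infty)$; by the symmetry $t\mapsto 1/t$ (which corresponds to conjugating $\bGank$ by a unitary and replacing $\bh(t)$ by $t^{-2}\bh(1/t)$, see \cite[Section~1.8]{Peller}) the two pieces are treated the same way, so it suffices to bound $s_n(\bGank(\bg_1))$ under $\bg_1^{(m)}(t) = o(t^{-1-m}(\log t)^{-\alpha})$ as $t\to\infty$.

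\textbf{Step 1: dyadic decomposition.} Write $\bg_1 = \sum_{k\geq 0} \bg_1 \psi_k$ where $\{\psi_k\}$ is a smooth partition of unity with $\supp\psi_k \subset [2^{k-1}, 2^{k+1}]$. On the $k$-th dyadic block the hypothesis gives $|\bg_1^{(m)}(t)| \leq \eps_k 2^{-k(1+m)} k^{-\alpha}$ with $\eps_k\to 0$. Rescaling $t = 2^k \tau$ normalises each block to a fixed compactly supported profile of size $\eps_k k^{-\alpha}$, and the Hankel structure is preserved under dilation up to unitary equivalence. Summing the blocks, one expresses $\bGank(\bg_1)$ as a sum of rescaled pieces whose operator norms and, more importantly, whose singular-value counting functions can be controlled termwise.

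\textbf{Step 2: estimate for a single normalised block.} For a smooth kernel supported in a fixed compact interval away from the origin, the corresponding integral Hankel operator is trace class (indeed, in every Schatten class), with Schatten norms bounded by Sobolev norms of the kernel. This is the core quantitative input and follows from standard estimates for integral operators with smooth kernels, or alternatively by reduction to a $\Psi$DO of arbitrarily negative order via Theorem~\ref{thm.d2} and then Cwikel estimates. The key point is to track the dependence on $k$: block $k$ contributes an operator whose $n$-th singular value decays rapidly in $n$ with a prefactor $\eps_k k^{-\alpha}$, and whose "support" in the spectral parameter is localised near scale $2^k$.

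\textbf{Step 3: summation and the main obstacle.} Combining the blocks requires a Ky~Fan / Weyl-type inequality for counting functions: $n_+(\eps; \sum_k T_k) \leq \sum_k n_+(\eps(1-\delta)2^{-|k-\ell|}; T_k)$-style bookkeeping, exploiting that blocks at widely separated dyadic scales are "almost orthogonal". The hard part will be organising this summation so that the $o(\cdot)$ — rather than merely $O(\cdot)$ — is genuinely obtained: one must split the sum over $k$ at a threshold depending on $n$, use $\eps_k\to 0$ on the bulk and the rapid off-diagonal decay of the smooth blocks on the tail, and verify that both contributions are $o(n^{-\alpha})$. This is exactly the kind of argument carried out in \cite{0} in the discrete setting, so in practice the cleanest route may be to make the reduction to Theorem~\ref{thm.a1} itself — via an explicit unitary from $L^2(\bbR_+)$ to $\ell^2(\bbZ_+)$ sending $\bGank(\bg)$ to a discrete Hankel operator $\Gank(g)$ with $g^{(m)}(j) = o(j^{-1-m}(\log j)^{-\alpha})$ — and let that theorem do the summation work. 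Identifying such a transform and checking that it sends the decay/smoothness hypotheses on $\bg$ to the corresponding iterated-difference hypotheses on $g$ is then the main technical obstacle.
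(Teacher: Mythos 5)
First, a structural point: this statement is not proved in the paper at all --- it is quoted from the companion paper \cite{0}, and the present paper uses it as a black box. So your proposal has to stand on its own as a proof, and as written it does not: it is a plan whose decisive steps are exactly the ones left open. In Step~3 the bookkeeping ``$n_+(\eps;\sum_k T_k)\leq \sum_k n_+(\eps(1-\delta)2^{-|k-\ell|};T_k)$'' is not a valid inequality; the correct subadditivity $n_\pm(\eps_1+\eps_2;A+B)\leq n_\pm(\eps_1;A)+n_\pm(\eps_2;B)$ forces you to distribute $\eps$ over the dyadic blocks, and since the block sizes are of order $\eps_k k^{-\alpha}$ with $\sum_k k^{-\alpha}=\infty$ for $\alpha\leq 1$, a naive distribution does not yield even $O(n^{-\alpha})$, let alone the required $o(n^{-\alpha})$. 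The claimed ``almost orthogonality'' of Hankel blocks at separated dyadic scales is also unproven (the products $\bGank(\bg\psi_k)\bGank(\bg\psi_l)^*$ do not vanish and no estimate for them is offered); making the summation work requires either weak-Schatten quasinorm/interpolation machinery of Peller type or a genuinely quantitative off-diagonal estimate, and this is precisely the content of \cite{0}, to which your plan ultimately defers. Likewise the fallback ``transfer to Theorem~\ref{thm.a1} by an explicit unitary onto $\ell^2(\bbZ_+)$'' is not a shortcut: the authors explicitly remark that the discrete and continuous cases are only loosely related (Laguerre transform, conformal change of symbol) and that independent arguments are technically preferable; verifying that your hypotheses on $\bg$ translate into the iterated-difference hypotheses \eqref{ca7} for the transformed sequence is itself a substantial task which the proposal does not attempt. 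Since Theorem~\ref{thm.a1} is also an external result of \cite{0}, this route in any case reduces one quoted theorem to another rather than proving it.

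There is also a concrete error in your preliminary reduction: the ``symmetry $t\mapsto 1/t$'' does not act on integral Hankel operators in the way you claim. Conjugating $\bGank(\bh)$ by the inversion unitary $(Wf)(t)=t^{-1}f(1/t)$ produces the kernel $(tu)^{-1}\,\bh\bigl((t+u)/(tu)\bigr)$, which is of Hankel form $\widetilde{\bh}(t+u)$ only when $\bh$ is homogeneous of degree $-1$ (the Carleman kernel); in general $\bGank(\bh)$ is \emph{not} unitarily equivalent to $\bGank(t^{-2}\bh(1/t))$, and \cite[Section~1.8]{Peller} does not assert this. The symmetry between the behaviour at $t=0$ and $t=\infty$ appears only at the level of the symbol, $\fs(\xi)\mapsto\fs(-\xi)$, after the reduction of Theorem~\ref{thm.d2}, which requires $\bh$ to be a Laplace transform of a bounded function and is therefore not available for a general $\bg$ satisfying the hypotheses. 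This particular flaw is repairable (treat the singularity at $0$ by a separate, analogous dyadic decomposition), but together with the unproven Steps~2--3 it means the proposal does not constitute a proof of the theorem.
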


As in the discrete case, Theorem~\ref{thm.a2} remains true if   $o$ is replaced by  $O$.

Theorems~\ref{thm.a1} and \ref{thm.a2} will be used in combination with 
the following standard result (see e.g. \cite[Section 11.6]{BSbook})
in spectral perturbation theory, which asserts the stability of eigenvalue asymptotics.

%%%%%%%%%%%%%%%%%%%%%%%
\begin{lemma}\label{lma.b1}\cite[Section 11.6]{BSbook}
%%%%%%%%%%%%%%%%%%%%%%%
Let $A$ and $B$ be compact self-adjoint operators and let $\alpha>0$.
Suppose that, for both signs $``\pm"$,   
$$
\lambda^\pm_n(A)
=
C^\pm n^{-\alpha}+o(n^{-\alpha}),
\quad \text{ and }\quad
s_n(B)
=
o(n^{-\alpha}),
\quad
\quad n\to\infty.
$$
Then 
$$
\lambda^\pm_n(A+B)
=
C^\pm n^{-\alpha}+o(n^{-\alpha}),
\quad n\to\infty.
$$
\end{lemma}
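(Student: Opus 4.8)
The plan is to reduce the statement to the Ky Fan--Weyl inequalities for the eigenvalues of a sum of compact self-adjoint operators and then optimise over a splitting index. Recall that for compact self-adjoint $A,B$ and integers $j,k\geq1$ one has
\[
\lambda_{j+k-1}^+(A+B)\leq\lambda_j^+(A)+\lambda_k^+(B)\leq\lambda_j^+(A)+s_k(B),
\]
with the usual convention that $\lambda_n^+(\cdot)=0$ when the operator has fewer than $n$ positive eigenvalues. Applying the same inequality to the pair $A+B$ and $-B$ (whose sum is $A$) yields the matching lower bound
\[
\lambda_j^+(A+B)\geq\lambda_{j+k-1}^+(A)-s_k(B).
\]
These two inequalities, combined with the hypotheses $\lambda_n^+(A)=C^+n^{-\alpha}+o(n^{-\alpha})$ and $s_n(B)=o(n^{-\alpha})$, are all that is needed; the assertion for the negative eigenvalues follows by replacing $A,B$ with $-A,-B$.

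Next I would fix $\delta\in(0,1)$ and, for each large $n$, set $k=k(n)=\lceil\delta n\rceil$. For the upper bound, choose $j$ so that $j+k-1=n$, i.e.\ $j=n-k+1=(1-\delta)n+O(1)$, which gives
\[
\lambda_n^+(A+B)\leq\lambda_{(1-\delta)n+O(1)}^+(A)+s_{\lceil\delta n\rceil}(B).
\]
Since $\delta$ is held fixed, $s_{\lceil\delta n\rceil}(B)=o(\lceil\delta n\rceil^{-\alpha})=o(n^{-\alpha})$, while the monotonicity of $m\mapsto\lambda_m^+(A)$ and the asymptotics of $\lambda_m^+(A)$ give $\lambda_{(1-\delta)n+O(1)}^+(A)=C^+(1-\delta)^{-\alpha}n^{-\alpha}+o(n^{-\alpha})$. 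Hence $\limsup_n n^\alpha\lambda_n^+(A+B)\leq C^+(1-\delta)^{-\alpha}$. For the lower bound, take $j=n$ in the second inequality, so $j+k-1=(1+\delta)n+O(1)$ and
\[
\lambda_n^+(A+B)\geq\lambda_{(1+\delta)n+O(1)}^+(A)-s_{\lceil\delta n\rceil}(B)=C^+(1+\delta)^{-\alpha}n^{-\alpha}+o(n^{-\alpha}),
\]
so that $\liminf_n n^\alpha\lambda_n^+(A+B)\geq C^+(1+\delta)^{-\alpha}$.

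Finally I would let $\delta\to0$ in both bounds to conclude $\lim_n n^\alpha\lambda_n^+(A+B)=C^+$, that is, $\lambda_n^+(A+B)=C^+n^{-\alpha}+o(n^{-\alpha})$; the degenerate case $C^+=0$ is covered automatically, since then the upper bound already forces $n^\alpha\lambda_n^+(A+B)\to0$. There is no genuine obstacle here beyond bookkeeping: the two points that require a little care are the uniformity in $n$ of the $o(\cdot)$ remainders after the substitution of the running index by $\lceil\delta n\rceil$ or $(1\pm\delta)n+O(1)$ (legitimate because $\delta$ is a fixed parameter, sent to zero only afterwards), and the use of monotonicity of $n\mapsto\lambda_n^+(A)$ to replace an index of the form $(1\pm\delta)n+O(1)$ by $(1\pm\delta)n$ inside the asymptotic formula. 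This reproduces, for completeness, the standard argument of \cite[Section 11.6]{BSbook}.
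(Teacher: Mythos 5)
Your argument is correct: the Ky Fan--Weyl inequality $\lambda_{j+k-1}^+(A+B)\leq\lambda_j^+(A)+s_k(B)$ (and its counterpart obtained from $A=(A+B)+(-B)$), together with the splitting $k=\lceil\delta n\rceil$ and the limit $\delta\to0$, does yield $\lim_n n^\alpha\lambda_n^\pm(A+B)=C^\pm$, and your conventions (padding $\lambda_n^+$ by zero, $\lambda_k^+(B)\leq s_k(B)$) are sound. The paper itself does not prove the lemma; it cites \cite[Section 11.6]{BSbook}, where the argument is phrased in terms of the counting functions $n_\pm(\eps)$ and the functionals $\Delta_p^\pm$, $\delta_p^\pm$, whose continuity in $\Sch_{p,\infty}$ is exactly what is recalled as \eqref{C5}--\eqref{C6} in the Appendix; that route rests on the subadditivity $n_\pm(\eps_1+\eps_2;A+B)\leq n_\pm(\eps_1;A)+n_\pm(\eps_2;B)$, which is the counting-function form of the very inequality you use. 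So your proof is essentially the standard one, written at the level of eigenvalue indices (with the auxiliary parameter $\delta\to0$) rather than at the level of counting functions (with a ratio of $\eps$'s sent to $0$); the two formulations are interchangeable, and yours has the merit of being self-contained, while the cited formulation packages the same estimate into the quasinorm continuity of $\Delta_p^\pm,\delta_p^\pm$, which the paper reuses anyway in the Appendix for the approximation argument.
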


%%%%%%%%%%%%%%%%%%%%%%%%%%%%%%%%%%%%%%%%%%%%%%%%%
%%%%%%%%%%%%%%%%%%%%%%%%%%%%%%%%%%%%%%%%%%%%%%%%%
\section{Continuous case}\label{sec.d}
%%%%%%%%%%%%%%%%%%%%%%%%%%%%%%%%%%%%%%%%%%%%%%%%%
%%%%%%%%%%%%%%%%%%%%%%%%%%%%%%%%%%%%%%%%%%%%%%%%%

\subsection{Statement of the main result}
Our main result in the continuous case is

%%%%%%%%%%%%%%%%%
\begin{theorem}\label{thm.d1}
%%%%%%%%%%%%%%%%%
Let $\alpha>0$, $\bb_0,\bb_\infty\in\bbR$, and let $M=M(\alpha)$ be as defined in \eqref{c5}.
Let $\bh$ be a real valued function in $L^\infty_\loc(\bbR_+)$; if $\alpha\geq1/2$, assume also 
that $\bh\in C^M(\bbR_+)$.
Suppose that 
\begin{align}
\biggl( \frac{d}{dt}\biggr)^m
\bigl(\bh(t)-\bb_0 t^{-1}(\log(1/t))^{-\alpha}\bigr)
&=
o(t^{-1-m}\jap{\log t}^{-\alpha}), \quad t\to0, 
\label{d1}
\\
\biggl( \frac{d}{dt}\biggr)^m
\bigl(\bh(t)-\bb_\infty t^{-1}(\log t)^{-\alpha} \bigr)
&=
o(t^{-1-m}\jap{\log t}^{-\alpha}), \quad t\to\infty
\label{d2}
\end{align}
for  all $m=0,1,\dots,M$. 
Then the eigenvalues of the corresponding Hankel operator  $\bGank(\bh)$ 
have the asymptotic behaviour
\begin{equation}
\lambda_n^\pm(\bGank(\bh))
=
\bc^\pm n^{-\alpha}
+
o(n^{-\alpha}),
\quad
\bc^\pm
=
v(\alpha)\bigl((\bb_0)_\pm^{1/\alpha}+(\bb_\infty)_\pm^{1/\alpha}\bigr)^\alpha,
\label{d3a}
\end{equation}
as $n\to\infty$,
where $v(\alpha)$ is given by \eqref{a4}.  
\end{theorem}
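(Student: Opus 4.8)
The plan is to pass to the pseudodifferential side via Theorem~\ref{thm.d2}, build there an explicit model whose spectral asymptotics are delivered by Theorem~\ref{thm.b2}, transport this model back to a Hankel operator with an explicit kernel, and then show that the difference of that kernel and $\bh$ is negligible by invoking Theorem~\ref{thm.a2} together with the perturbation Lemma~\ref{lma.b1}.

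\emph{Model $\Psi$DO.} First I would fix a real-valued $a_*\in C^\infty(\bbR)$ with $a_*(\xi)=\bb_0\abs{\xi}^{-\alpha}$ for $\xi\le-1$ and $a_*(\xi)=\bb_\infty\xi^{-\alpha}$ for $\xi\ge1$ (so $a_*$ is bounded and vanishes at $\pm\infty$), and set $\Psi_*=\fb(X)a_*(D)\fb(X)$ with $\fb$ from \eqref{eq:Xx}. Since $\fb$ decays exponentially, \eqref{b1a} holds for every $\rho$, so Theorem~\ref{thm.b2} applies with $A(-\infty)=\bb_0$, $A(+\infty)=\bb_\infty$ and gives $\lambda_n^\pm(\Psi_*)=C^\pm n^{-\alpha}+o(n^{-\alpha})$. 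The coefficient is then pinned down by the standard evaluation
\begin{equation}
\int_\bbR\Bigl(\frac{\pi}{\cosh\pi x}\Bigr)^{1/\alpha}dx=\pi^{1/\alpha-1}B\bigl(\tfrac1{2\alpha},\tfrac12\bigr),
\label{eq.beta}
\end{equation}
which turns \eqref{b5} into exactly $C^\pm=v(\alpha)\bigl((\bb_0)_\pm^{1/\alpha}+(\bb_\infty)_\pm^{1/\alpha}\bigr)^\alpha=\bc^\pm$.

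\emph{Model Hankel operator and Abelian analysis.} Next, put $\sigma_*(\lambda):=a_*(-\log\lambda)$, a bounded real function on $\bbR_+$ vanishing at $0$ and at $\infty$, and let $\bh_*:=\calL\sigma_*\in C^\infty(\bbR_+)$. By Theorem~\ref{thm.d2} the operator $\bGank(\bh_*)$ is unitarily equivalent to $\Psi_*$, hence $\lambda_n^\pm(\bGank(\bh_*))=\bc^\pm n^{-\alpha}+o(n^{-\alpha})$. The core of the argument is then an Abelian-type analysis of $\bh_*(t)=\int_0^\infty e^{-\lambda t}\sigma_*(\lambda)\,d\lambda$ and of its $t$-derivatives: splitting off the piece of $\sigma_*$ supported in $[e^{-1},e]$ (whose Laplace transform is entire, bounded as $t\to0$ and exponentially small as $t\to\infty$, hence harmless), rescaling $\lambda=u/t$ and using $\log(u/t)=\log(1/t)+\log u$ with dominated convergence, I expect to obtain, for all $m=0,\dots,M$,
\begin{equation}
\Bigl(\tfrac{d}{dt}\Bigr)^m\bigl(\bh_*(t)-\bb_0 t^{-1}(\log(1/t))^{-\alpha}\bigr)=o\bigl(t^{-1-m}\jap{\log t}^{-\alpha}\bigr),\quad t\to0,
\label{eq.abel}
\end{equation}
and the corresponding relation with $\bb_\infty(\log t)^{-\alpha}$ as $t\to\infty$.

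\emph{Conclusion.} Finally, set $\bg_*:=\bh-\bh_*$. Subtracting \eqref{eq.abel} and its counterpart at infinity from the hypotheses \eqref{d1}, \eqref{d2} shows that $\bg_*$ satisfies the assumptions of Theorem~\ref{thm.a2} (note $\bh_*\in C^\infty(\bbR_+)\subset C^M(\bbR_+)$ is locally bounded, so $\bg_*$ inherits the regularity of $\bh$), whence $s_n(\bGank(\bg_*))=o(n^{-\alpha})$; since $\bh$ and $\bh_*$ are real, $\bGank(\bh)=\bGank(\bh_*)+\bGank(\bg_*)$ is a self-adjoint perturbation of the kind covered by Lemma~\ref{lma.b1}, which yields \eqref{d3a}. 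I expect the only genuinely delicate step to be the derivative-level Abelian asymptotics \eqref{eq.abel}: extracting an $o$-remainder rather than merely the leading term, and controlling $\partial_t^m$, which produces factors $\lambda^m$ under the integral and must be matched against $\partial_t^m\bigl(t^{-1}(\log(1/t))^{-\alpha}\bigr)\sim(-1)^m m!\,t^{-1-m}(\log(1/t))^{-\alpha}$, requires a careful — though essentially routine — estimation of the tail and the near-origin contributions of the rescaled integral.
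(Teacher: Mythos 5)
Your argument is correct and follows essentially the same route as the paper: your model symbol $a_*$ and kernel $\bh_*=\calL\sigma_*$ with $\sigma_*(\lambda)=a_*(-\log\lambda)$ coincide (up to the choice of smooth cut-offs) with the paper's \eqref{d6}, your Beta-function evaluation of the Weyl coefficient reproduces \eqref{d10a}, and the concluding comparison of $\bh$ with $\bh_*$ via Theorem~\ref{thm.a2} and Lemma~\ref{lma.b1} is exactly the paper's final step. The derivative-level Abelian asymptotics of $\calL\sigma_*$ that you rightly single out as the only delicate point is precisely what the paper's Lemmas~\ref{L}, \ref{M} and \ref{lma.d4} supply (with the stronger remainder $O(t^{-1-m}\jap{\log t}^{-\alpha-1})$), and your sketch of that step (discard the middle piece of $\sigma_*$, rescale $\lambda$, compare with the $m$-th derivative of the leading term) is sound.
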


Of course, this includes Theorem~\ref{thm.aa2} as a particular case. 

Let us describe the plan   of the proof of Theorem~\ref{thm.d1}. The first and the most important step  
is to construct a \emph{model operator}. To  that end, we introduce an auxiliary explicit function
$\sigma_*(\lambda)$ such that 
its Laplace transform   $\bh_* (t)=(\calL \sigma_*) (t)$ has the same asymptotics for $t\to 0$ and $t\to\infty$ as the kernel  
$\bh (t)$. To be more precise, we check that the difference $\bh-\bh_{*}$ satisfies the assumptions of 
Theorem~\ref{thm.a2} (singular value estimates).  
Then we apply the abstract Lemma~\ref{lma.b1} to conclude 
that the eigenvalues of the Hankel operators $\bGank(\bh)$ and $\bGank(\bh_{*})$ 
have the same asymptotic behaviour.

  Next,   Theorem~\ref{thm.d2} (reduction to $\Psi$DO) implies that the model Hankel operator $\bGank(\bh_*)$ is unitarily equivalent 
  to the $\Psi$DO $\Psi_{*} = \fb(X) \fs_*(D)\fb(X)$
in $L^2(\bbR)$ where $\fb(x)$ is the function \eqref{eq:Xx} and
$\fs_{*}(\xi)=\sigma_*(e^{-\xi})$.  
Theorem~\ref{thm.b2} (Weyl spectral asymptotics of $\Psi$DO) 
allows us to find the spectral asymptotics of the operator $\Psi_{*}$ and hence of $\bGank(\bh_*)$.

%%%%%%%%%%%%%%%%%%%%%%%%%%%%%%%%%%%%%%%%
\subsection{The model operator}
%%%%%%%%%%%%%%%%%%%%%%%%%%%%%%%%%%%%%%%%

Let us define the  auxiliary function $\sigma_*(\lambda)$ by the formula
\begin{equation}
\sigma_*(\lambda)
=
\bb_\infty\abs{\log \lambda}^{-\alpha}\chi_0(\lambda)
+
\bb_0\abs{\log \lambda}^{-\alpha}\chi_\infty(\lambda), 
\quad 
\lambda>0,
\label{d6}
\end{equation}
where the smooth cut-off functions $\chi_0$ and $\chi_\infty$ are defined by \eqref{a7a}. 
Our model operator is the Hankel operator $\bGank(\bh_*)$ where $\bh_*=\calL \sigma_*$. 

%%%%%%%%%%%%%%%%%%%%
\begin{lemma}\label{lma.c0}
%%%%%%%%%%%%%%%%%%%%
The eigenvalues of the model Hankel operator $\bGank(\bh_*)$
obey the asymptotic relation
\begin{equation}
\lambda_n^\pm(\bGank(\bh_*))
=
\bc^\pm n^{-\alpha}+o(n^{-\alpha}), 
\quad 
n\to\infty,
\label{e11}
\end{equation}
where the coefficients $\bc^\pm$ are given by \eqref{d3a}. 
\end{lemma}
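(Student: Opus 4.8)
The plan is to follow the outline given just before the statement: the kernel $\bh_*=\calL\sigma_*$ has been engineered so that the associated $\Psi$DO has an explicit symbol to which Theorem~\ref{thm.b2} applies directly. First I would note that $\sigma_*\in L^\infty(\bbR_+)$: on $\supp\chi_0\subset(0,1/2]$ and on $\supp\chi_\infty\subset[2,\infty)$ one has $\abs{\log\lambda}\geq\log2$, so $\sigma_*$ is bounded (in fact smooth on $\bbR_+$ and identically zero near $\lambda=1$). Hence Theorem~\ref{thm.d2} applies and $\bGank(\bh_*)$ is unitarily equivalent to $\Psi_*=\fb(X)\fs_*(D)\fb(X)$ with $\fs_*(\xi)=\sigma_*(e^{-\xi})$. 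Since $\log(e^{-\xi})=-\xi$, this gives the explicit formula
$$
\fs_*(\xi)=\bb_\infty\abs{\xi}^{-\alpha}\chi_0(e^{-\xi})+\bb_0\abs{\xi}^{-\alpha}\chi_\infty(e^{-\xi}).
$$
Observe the ``flip'': $\chi_0(e^{-\xi})$ is supported in $\{\xi\geq\log2\}$ and equals $1$ for $\xi\geq\log4$, whereas $\chi_\infty(e^{-\xi})$ is supported in $\{\xi\leq-\log2\}$ and equals $1$ for $\xi\leq-\log4$; in particular $\fs_*$ vanishes on the interval $(-\log2,\log2)$, so the apparent singularity of $\abs{\xi}^{-\alpha}$ at $\xi=0$ is harmless. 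This also makes transparent the ``asymptotic orthogonality'' of Remark~\ref{rmk.b2}: the $\bb_0$ and $\bb_\infty$ parts of $\fs_*$ live near $\xi=-\infty$ and $\xi=+\infty$ respectively.

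\textbf{Spectral asymptotics of $\Psi_*$.} From the above, $\fs_*\in C^\infty(\bbR)$ is real-valued and satisfies \eqref{b2} — indeed exactly for $\abs{\xi}$ large — with the given $\alpha$, $A(+\infty)=\bb_\infty$ and $A(-\infty)=\bb_0$. The weight $b=\fb$ from \eqref{eq:Xx} decays exponentially, so \eqref{b1a} holds with any $\rho$, in particular with some $\rho>\alpha/2$. Therefore Theorem~\ref{thm.b2} (whose proof in the Appendix also covers integer $\alpha$) yields
$$
\lambda_n^\pm(\Psi_*)=C^\pm n^{-\alpha}+o(n^{-\alpha}),\qquad
C^\pm=(2\pi)^{-\alpha}\bigl((\bb_0)_\pm^{1/\alpha}+(\bb_\infty)_\pm^{1/\alpha}\bigr)^\alpha\Bigl(\int_\bbR\fb(x)^{2/\alpha}dx\Bigr)^\alpha,
$$
and, by the unitary equivalence of Theorem~\ref{thm.d2}, the same asymptotics for $\lambda_n^\pm(\bGank(\bh_*))$.

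\textbf{Identifying the constant.} It then remains to check $C^\pm=\bc^\pm$, i.e.\ that $(2\pi)^{-\alpha}\bigl(\int_\bbR\fb(x)^{2/\alpha}dx\bigr)^\alpha=v(\alpha)$. Using $\fb(x)^2=\pi/\cosh(\pi x)$ and the substitution $y=\pi x$ one gets $\int_\bbR\fb(x)^{2/\alpha}dx=\pi^{1/\alpha-1}\int_\bbR(\cosh y)^{-1/\alpha}dy$, and the standard integral $\int_\bbR(\cosh y)^{-s}dy=B(s/2,\tfrac12)$ with $s=1/\alpha$ turns this into $\pi^{1/\alpha-1}B(\tfrac1{2\alpha},\tfrac12)$. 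Raising to the power $\alpha$ and multiplying by $(2\pi)^{-\alpha}$ gives $2^{-\alpha}\pi^{1-2\alpha}B(\tfrac1{2\alpha},\tfrac12)^\alpha$, which is precisely $v(\alpha)$ by \eqref{a4}. This completes the identification and hence the proof.

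\textbf{Main obstacle.} There is no serious difficulty: the lemma is a bookkeeping combination of Theorems~\ref{thm.d2} and \ref{thm.b2}. The only points needing care are verifying that the (flipped) cut-offs in \eqref{d6} really make $\fs_*$ smooth and remove the singularity at $\xi=0$ — without which Theorem~\ref{thm.b2} would not apply — and the elementary Beta-function computation that matches the constant in \eqref{b5} with $v(\alpha)$.
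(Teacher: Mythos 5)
Your proof is correct and follows essentially the same route as the paper: reduction of $\bGank(\bh_*)$ to the $\Psi$DO $\Psi_*$ via Theorem~\ref{thm.d2}, application of Theorem~\ref{thm.b2} with $A(+\infty)=\bb_\infty$, $A(-\infty)=\bb_0$, and identification of the constant with $v(\alpha)$. The only (immaterial) difference is that you evaluate $\int_\bbR(\cosh y)^{-1/\alpha}dy$ by quoting the standard Beta-function formula, whereas the paper derives it via the substitution $y=(\cosh\pi x)^2$ and \eqref{a16}.
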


\begin{proof}
According to Theorem~\ref{thm.d2}, the Hankel operator $\bGank(\bh_*)$
is unitarily equivalent to the $\Psi$DO $\Psi_{*} = \fb(X) \fs_*(D)\fb(X)$
in $L^2(\bbR)$ where $\fb(x)$ is the standard function \eqref{eq:Xx} and
$$
\fs_{*}(\xi)
=
\sigma_*(e^{-\xi})
=
\bb_\infty\abs{\xi}^{-\alpha}\chi_0(e^{-\xi})
+
\bb_0\abs{\xi}^{-\alpha}\chi_\infty(e^{-\xi}), 
\quad \xi\in\bbR.
$$
In particular, we have
\begin{equation}
\lambda_n^\pm(\bGank(\bh_*))
=
\lambda_n^\pm(\Psi_*), \quad n\in\bbN .
\label{eq:d6}
\end{equation}
Obviously, the function $\fs_{*}$ belongs to 
$C^\infty ({\bbR})$ and has the asymptotic behaviour \eqref{b2} with $A(+\infty)= \bb_{\infty}$ 
and $A(-\infty)= \bb_0$. 
Therefore Theorem~\ref{thm.b2} (Weyl spectral asymptotics of $\Psi$DO) applies to the operator $\Psi_{*}$.
This yields the asymptotic formula 
\begin{equation}
\lambda_n^\pm(\Psi_*)
=
C^\pm
n^{-\alpha}+o(n^{-\alpha}), 
\quad 
n\to\infty,
\label{e11a}
\end{equation}
where 
\begin{equation}
C^\pm
=
(2\pi)^{-\alpha}
\bigl((\bb_0)_\pm^{1/\alpha}+(\bb_\infty)_\pm^{1/\alpha}\bigr)^\alpha
\biggl(\int_{-\infty}^\infty(\pi(\cosh(\pi x))^{-1})^{1/\alpha} dx\biggr)^\alpha.
\label{d10}
\end{equation}
Using the change of variables $y=(\cosh(\pi x))^2$,     
the integral representation \eqref{a16} for the Beta function and the definition \eqref{a4} of $v(\alpha)$, 
we can   rewrite 
the integral in \eqref{d10}  as
\begin{multline}
(2\pi)^{-\alpha}
\biggl(\int_{-\infty}^\infty(\pi(\cosh(\pi x))^{-1})^{1/\alpha} dx\biggr)^\alpha
=
\pi^{1-\alpha}
\biggl(\int_{0}^\infty(\cosh(\pi x))^{-1/\alpha} dx\biggr)^\alpha
\\
=
2^{-\alpha}\pi^{1-2\alpha}
\biggl(\int_1^\infty y^{-\tfrac1{2\alpha}-\tfrac12}(y-1)^{-\tfrac12}dy\biggr)^\alpha
=
v(\alpha).
\label{d10a}
\end{multline}
Thus, $C^\pm=\bc^\pm$, where $\bc^\pm$ are the coefficients in \eqref{d3a}.
Combining \eqref{eq:d6} with \eqref{e11a}, we obtain \eqref{e11}.
\end{proof}

%%%%%%%%%%%%%%%%%%%%%%%%%%%%%%%%%%%%%%%%%%%%
\subsection{Laplace transforms of functions with logarithmic singularities}
%%%%%%%%%%%%%%%%%%%%%%%%%%%%%%%%%%%%%%%%%%%%

Let $\sigma_*(\lambda)$ be given by formula
\eqref{d6},   and  let $\bh_*=\calL \sigma_*$. 
Here we find the asymptotics of  the function $\bh_{*} (t)$  as $t\to \infty$ and $t\to 0$. To that end,
we need some  elementary technical statements about the Laplace transforms of 
functions with logarithmic singularities at $\lambda=0$ and $\lambda= \infty$. 
The results below are well known; see, e.g.,  Lemmas~3 and 4 in \cite{Erdelyi}. 
However, for completeness we give simple straightforward proofs.

%%%%%%%%%%%%%%%%%%%
\begin{lemma}\label{L}
%%%%%%%%%%%%%%%%%%%
Let
$$
I_m(t)= \int_{0}^c (-\log \lambda)^{-\alpha}\lambda^{m} e^{-\lambda t}d\lambda, 
$$
where $\alpha>0$,  $m\in\bbZ_+$ and $c\in (0,1)$. 
Then 
\begin{equation}
I_m(t)=m!\, t^{-1-m} |\log t|^{-\alpha} (1+O(\abs{\log t}^{-1})),
\label{eq:L2}
\end{equation}
as $t\to\infty$.
\end{lemma}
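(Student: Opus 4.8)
The plan is to establish the large-$t$ asymptotics of $I_m(t)$ by a change of variables that concentrates the mass of the integral near $\lambda = 0$, where the factor $e^{-\lambda t}$ forces localization to scales $\lambda \sim 1/t$. Substituting $\lambda = s/t$ gives
\[
I_m(t) = t^{-1-m}\int_0^{ct} (\log(t/s))^{-\alpha} s^m e^{-s}\, ds,
\]
and since $\log(t/s) = \log t - \log s = \log t\,(1 - \log s/\log t)$, we have $(\log(t/s))^{-\alpha} = (\log t)^{-\alpha}(1 - \log s/\log t)^{-\alpha}$ for $s$ in the relevant range. The idea is then to expand $(1 - \log s/\log t)^{-\alpha} = 1 + O(\abs{\log s}/\abs{\log t})$ uniformly for $s$ in a region contributing all but an exponentially small part of the integral, pull out the leading factor $(\log t)^{-\alpha} = \abs{\log t}^{-\alpha}$, and use $\int_0^\infty s^m e^{-s}\, ds = m!$.

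The steps, in order, would be: first fix $t$ large enough that $ct > 1$, say, and split the $s$-integral at $s = \sqrt{\log t}$ (or any slowly growing cutoff). On the inner region $0 < s \le \sqrt{\log t}$ one has $\abs{\log s}/\log t \to 0$ uniformly, so $(1 - \log s/\log t)^{-\alpha} = 1 + O(\abs{\log s}/\log t)$; integrating against $s^m e^{-s}$ and using that $\int_0^\infty \abs{\log s}\, s^m e^{-s}\, ds < \infty$ yields $\int_{\text{inner}} = m! + O((\log t)^{-1})$, while the tail $\int_{\sqrt{\log t}}^\infty s^m e^{-s}\, ds$ is $O(e^{-c'\sqrt{\log t}})$, hence absorbed into the $O((\log t)^{-1})$ error. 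Second, on the outer region $\sqrt{\log t} < s \le ct$ one needs an upper bound: here $(\log(t/s))^{-\alpha}$ could blow up near $s = ct$ where $\log(t/s) = \log(1/c) > 0$ is bounded away from zero (since $c < 1$), so in fact $(\log(t/s))^{-\alpha} \le (\log(1/c))^{-\alpha}$ is bounded on this whole range, and $\int_{\sqrt{\log t}}^{ct} s^m e^{-s}\, ds = O(e^{-c'\sqrt{\log t}})$, again negligible compared to $(\log t)^{-\alpha}\cdot O((\log t)^{-1})$. Combining the two regions and restoring the prefactor $t^{-1-m}\abs{\log t}^{-\alpha}$ gives \eqref{eq:L2}.

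The one point requiring a little care — and the main obstacle, such as it is — is the behaviour of the weight $(\log(t/s))^{-\alpha}$ as $s$ approaches the upper limit $ct$: when $t$ is large, $ct$ is large, and $\log(t/s)$ is small there (it equals $\log(1/c)$ at the endpoint, which is positive but not large), so one must make sure the factor $(\log(t/s))^{-\alpha}$ stays bounded on the outer region rather than developing a singularity. Because $c < 1$ is a fixed constant strictly less than $1$, we have $t/s \ge 1/c > 1$ throughout $0 < s \le ct$, so $\log(t/s) \ge \log(1/c) > 0$ and the weight is uniformly bounded by $(\log(1/c))^{-\alpha}$ there; this is exactly why the hypothesis $c \in (0,1)$ is imposed. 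With that observation the remaining estimates are routine dominated-convergence-style bounds. Optionally one can iterate the expansion $(1-\log s/\log t)^{-\alpha} = \sum_{k\ge 0}\binom{-\alpha}{k}(-\log s/\log t)^k$ to get a full asymptotic expansion in powers of $\abs{\log t}^{-1}$, but for the stated result the first-order version suffices.
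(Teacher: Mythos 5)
Your proof is correct and is essentially the paper's argument: the same change of variables $s=\lambda t$, a split at an intermediate scale, the elementary bound $\abs{(1+u)^{-\alpha}-1}\leq C\abs{u}$, and the integrability of $\abs{\log s}\,s^m e^{-s}$ produce the $O(\abs{\log t}^{-1})$ relative error, while the outer region is killed by the boundedness of the weight (thanks to $c<1$) together with the exponential tail of $s^m e^{-s}$ (the paper cuts at $\lambda=t^{-1/2}$, i.e.\ $s=t^{1/2}$, rather than $s=\sqrt{\log t}$, which is immaterial). One cosmetic caveat: on $0<s\leq\sqrt{\log t}$ the ratio $\abs{\log s}/\log t$ does \emph{not} tend to $0$ uniformly (it blows up as $s\to0$), but the estimate you actually use, $(1-\log s/\log t)^{-\alpha}=1+O(\abs{\log s}/\log t)$ with a uniform constant, remains valid there because for $s<1$ the argument $1-\log s/\log t$ is $\geq 1$.
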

\begin{proof}
Let us split  $I_m(t)$ into the integrals
over $(0,t^{-1/2})$ and over $(t^{-1/2}, c)$. 
Due to the factor $e^{-\lambda t}$ the second integral 
can be estimated as
\begin{equation}
\int_{t^{-1/2}}^{c}
  (-\log\lambda)^{-\alpha} \lambda^m e^{-\lambda t}d\lambda
=
O(t^{-N}), \quad \forall N>0.
\label{A3a}
\end{equation}
Thus, it suffices to consider the integral over $(0,t^{-1/2})$. 
Making the change of variables $x=\lambda t$, we see that
$$
J (t):=
\int_0^{t^{-1/2}}
  (-\log\lambda)^{-\alpha} \lambda^m e^{-\lambda t}d\lambda
=
t^{-1-m}(\log t )^{-\alpha}\int_0^{t^{1/2}}  
  \bigl(1- \tfrac{\log x}{\log t}\bigr)^{-\alpha} x^m e^{-x}d x.
$$
Since $u= - \frac{\log x}{\log t} \geq -1/2$ for $x\leq t^{1/2}$, 
we can use the estimate  
\begin{equation}
\Abs{ (1+u)^{-\alpha} -1}
\leq 
C\abs{u}, 
\quad 
u\geq-1/2.
\label{eq:L}
\end{equation}
Thus we see that
\begin{equation}
J (t)=t^{-1-m}  (\log t )^{-\alpha}
\int_0^{t^{1/2}}  x^m  e^{-x}d x+R(t),
\label{eq:L4}
\end{equation} 
where the remainder $R(t)$ is estimated by
\begin{equation}
t^{-1-m}(\log t )^{-\alpha-1}\int_0^{\infty}    |\log x| x^m  e^{-x}d x.
\label{eq:L5}
\end{equation}
The integral in \eqref{eq:L4} can be extended to ${\bbR}_{+}$ and then calculated in terms of the Gamma function.  
The arising error decays   faster than any power of $t^{-1}$ as $t\to\infty$. 
Putting together \eqref{A3a} and \eqref{eq:L4}, we get \eqref{eq:L2}.
\end{proof}
  
Let us now state the assertion dual to Lemma~\ref{L}. Its proof is almost the same as that of Lemma~\ref{L}. 
  
 %%%%%%%%%%%%%%%%%%%
\begin{lemma}\label{M}
%%%%%%%%%%%%%%%%%%%
Let
 \begin{equation}
I_m(t)= 
\int_{c}^\infty (\log \lambda)^{-\alpha}\lambda^{m} e^{-\lambda t}d\lambda
\label{eq:M1}
\end{equation}
where $\alpha>0$,  $m\in\bbZ_+$ and $c>1$. Then $I_m(t)$ has the asymptotic behaviour 
\eqref{eq:L2} as $t\to 0$.
\end{lemma}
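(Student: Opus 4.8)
The plan is to mimic the proof of Lemma~\ref{L}, exploiting the fact that Lemma~\ref{M} is its mirror image under $\lambda\mapsto1/\lambda$, $t\mapsto1/t$: the singularity of the weight at $\lambda=0$ is replaced by one at $\lambda=\infty$, and the regime $t\to\infty$ by $t\to0$. Observe first that, since $c>1$, the weight $(\log\lambda)^{-\alpha}$ is smooth and bounded on the whole range $(c,\infty)$ -- its only awkward point $\lambda=1$ has been excluded, exactly as the point $\lambda=1$ was excluded in Lemma~\ref{L} by imposing $c<1$. As $t\to0$ the main contribution to \eqref{eq:M1} comes from $\lambda$ of order $t^{-1}$, so I would split $I_m(t)=\int_c^{t^{-1/2}}+\int_{t^{-1/2}}^\infty$, which is legitimate once $t<c^{-2}$.

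For the first piece, on $(c,t^{-1/2})$ we have $0<\log c\le\log\lambda$, hence $(\log\lambda)^{-\alpha}\le(\log c)^{-\alpha}$, while $e^{-\lambda t}\le1$; therefore
$$
\int_c^{t^{-1/2}}(\log\lambda)^{-\alpha}\lambda^m e^{-\lambda t}\,d\lambda
\le(\log c)^{-\alpha}\int_0^{t^{-1/2}}\lambda^m\,d\lambda
=O\bigl(t^{-(m+1)/2}\bigr),
$$
and since $t^{(m+1)/2}\abs{\log t}^\alpha\to0$ this is $o\bigl(t^{-1-m}\abs{\log t}^{-\alpha}\bigr)$, negligible even against the claimed error term. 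The point worth flagging is that, in contrast with Lemma~\ref{L}, the discarded region is negligible here for a \emph{polynomial} rather than a super-polynomial reason: the factor $e^{-\lambda t}$ gives nothing near $\lambda=c$, but the interval is short on the relevant scale $\lambda\sim t^{-1}$.

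For the second piece $J(t):=\int_{t^{-1/2}}^\infty(\log\lambda)^{-\alpha}\lambda^m e^{-\lambda t}\,d\lambda$ I would substitute $x=\lambda t$ and write $\log(x/t)=\log x-\log t=\abs{\log t}\bigl(1+\tfrac{\log x}{\abs{\log t}}\bigr)$ (using $\log t<0$), obtaining
$$
J(t)=t^{-1-m}\abs{\log t}^{-\alpha}\int_{t^{1/2}}^\infty\Bigl(1+\tfrac{\log x}{\abs{\log t}}\Bigr)^{-\alpha}x^m e^{-x}\,dx.
$$
The lower limit $x=t^{1/2}$ is chosen precisely so that $\tfrac{\log x}{\abs{\log t}}\ge-\tfrac12$ on the range of integration, which permits the use of the elementary estimate \eqref{eq:L} to get $\bigl(1+\tfrac{\log x}{\abs{\log t}}\bigr)^{-\alpha}=1+O\bigl(\abs{\log t}^{-1}\abs{\log x}\bigr)$ uniformly in $x\ge t^{1/2}$. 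Splitting off the main term, $\int_{t^{1/2}}^\infty x^m e^{-x}\,dx=m!-O\bigl(t^{(m+1)/2}\bigr)=m!\,\bigl(1+O(\abs{\log t}^{-1})\bigr)$, while the remainder is bounded by $\abs{\log t}^{-1}\int_0^\infty\abs{\log x}x^m e^{-x}\,dx=O(\abs{\log t}^{-1})$, that integral being finite since $m\ge0$. Hence $J(t)=m!\,t^{-1-m}\abs{\log t}^{-\alpha}\bigl(1+O(\abs{\log t}^{-1})\bigr)$, and adding the first (negligible) piece yields \eqref{eq:L2} as $t\to0$. There is no genuine obstacle; the only points requiring a little care are the correct choice of the cut-off $t^{-1/2}$ (forced by the requirement $u\ge-1/2$ in \eqref{eq:L} after the substitution $x=\lambda t$), keeping track of the signs of $\log t$ and $\log c$, and recognising that the interval $(c,t^{-1/2})$ is discarded here for a reason different from the corresponding step in Lemma~\ref{L}.
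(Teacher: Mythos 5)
Your proof is correct and follows essentially the same route as the paper's: the same splitting of the integral at $\lambda=t^{-1/2}$, the same change of variables $x=\lambda t$ with the estimate \eqref{eq:L} applied on $x\geq t^{1/2}$, and the same extension of the resulting Gamma-type integral to $\bbR_+$ with a polynomially small error. The only cosmetic difference is that you bound the discarded piece over $(c,t^{-1/2})$ by $O(t^{-(m+1)/2})$ rather than $Ct^{-(m+1)/2}\abs{\log t}^{-\alpha}$, which is equally sufficient.
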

   
 \begin{proof}
Now we    split \eqref{eq:M1} into the integrals
over $(c,t^{-1/2})$ and over $(t^{-1/2}, \infty)$. 
The first integral can be estimated by 
$C t^{-(m+1)/2}\abs{\log t}^{-\alpha}$. 
In  the integral over $(t^{-1/2}, \infty)$, we make the change of variables $x=\lambda t$ which yields the integral
$$
\int_{t^{-1/2}}^\infty
 (\log\lambda)^{-\alpha} \lambda^m  e^{-\lambda t}d\lambda
=
t^{-1-m}|\log t |^{-\alpha}\int_{t^{1/2}}^\infty 
  \bigl(1+ \tfrac{\log x}{|\log t|}\bigr)^{-\alpha} x^m  e^{-x}d x
  =:J (t).
$$
Since $ u=  \frac{\log x}{|\log t|} \geq -1/2$ for $x\geq t^{1/2}$,   we can use \eqref{eq:L} again so that
\begin{equation}
J (t)=t^{-1-m}\abs{\log t}^{-\alpha}
\int_{t^{1/2}}^\infty x^m e^{-y}dy+R(t),
\label{eq:MZ2}
\end{equation}
where the remainder is estimated by \eqref{eq:L5}.
The integral in \eqref{eq:MZ2} can be extended to $\bbR_+$ with 
an arising error estimated by $C t^{(m+1)/2}$. 
Putting together the results obtained, we obtain the asymptotics 
\eqref{eq:L2} as $t\to0$ for the integral \eqref{eq:M1}. 
\end{proof}
  
Combining  Lemmas~\ref{L} and \ref{M} we easily obtain the following result.

%%%%%%%%%%%%%%%%
\begin{lemma}\label{lma.d4}
%%%%%%%%%%%%%%%%
Let the function $\sigma_*$ be given by \eqref{d6}, and let $\bh_{*}=\calL\sigma_*$ be 
 its Laplace transform. Then
$$
\bh_{*}=\bb_0 \bh_0+\bb_\infty \bh_\infty+\wt \bg,
$$
where the model kernels $\bh_0$, $\bh_\infty$ are defined by \eqref{a7b} and the error term $\wt \bg\in C^\infty(\bbR_+)$ satisfies the estimates
\begin{equation}
\abs{\wt \bg^{(m)}(t)}\leq C_m t^{-1-m}\jap{\log t}^{-\alpha-1}, \quad t>0,
\label{d8}
\end{equation}
for all integers $m\geq0$.
\end{lemma}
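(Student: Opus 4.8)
The plan is to unwind the Laplace transform in \eqref{d6} term by term and match each piece against one of the model kernels $\bh_0$, $\bh_\infty$ from \eqref{a7b}. Writing $\bh_*=\calL\sigma_*=\bb_\infty P_0+\bb_0 P_\infty$ with
\[
P_0(t)=\int_0^\infty e^{-\lambda t}\abs{\log\lambda}^{-\alpha}\chi_0(\lambda)\,d\lambda,
\qquad
P_\infty(t)=\int_0^\infty e^{-\lambda t}\abs{\log\lambda}^{-\alpha}\chi_\infty(\lambda)\,d\lambda,
\]
the claim reduces to showing that $P_0-\bh_\infty$ and $P_\infty-\bh_0$ belong to $C^\infty(\bbR_+)$ and obey \eqref{d8}; one then sets $\wt\bg=\bb_\infty(P_0-\bh_\infty)+\bb_0(P_\infty-\bh_0)$. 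Smoothness is automatic: in $P_0$, $P_\infty$ one may differentiate under the integral sign (this just inserts a factor $(-\lambda)^m$, legitimate since $\sigma_*$ is bounded and $\lambda^m e^{-\lambda t}$ decays), and in $\bh_0$, $\bh_\infty$ the singularity of $\abs{\log t}^{-\alpha}$ at $t=1$ is removed by the cut-offs.

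I would treat $P_0$ first, splitting $\bbR_+$ at some $t_0>4$. For $t\ge t_0$, replace $\chi_0$ by the sharp cut-off $\1_{(0,1/4)}$: the difference is supported in $[1/4,1/2]$, so it and all its $t$-derivatives contribute $O(e^{-t/4})$, while the remaining integral is precisely $(-1)^m I_m(t)$ of Lemma~\ref{L}. Hence $P_0^{(m)}(t)=(-1)^m m!\,t^{-1-m}(\log t)^{-\alpha}\bigl(1+O((\log t)^{-1})\bigr)$ as $t\to\infty$. Since $\bh_\infty(t)=t^{-1}(\log t)^{-\alpha}$ for $t\ge4$, an elementary induction shows $\bh_\infty^{(m)}$ has the \emph{same} leading asymptotics; subtracting, the principal terms cancel and $\abs{(P_0-\bh_\infty)^{(m)}(t)}\le C_m t^{-1-m}(\log t)^{-\alpha-1}$ for $t\ge t_0$. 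For $t\le t_0$, note that $\bh_\infty$ vanishes on $(0,2]$ and that $\abs{P_0^{(m)}(t)}\le\int_0^{1/2}\lambda^m(-\log\lambda)^{-\alpha}\,d\lambda<\infty$; as $t^{-1-m}\jap{\log t}^{-\alpha-1}$ is bounded below on $(0,t_0]$, the estimate \eqref{d8} for $P_0-\bh_\infty$ follows on that range too (trivially on the compact part $[2,t_0]$, where everything is smooth and bounded).

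The analysis of $P_\infty$ is symmetric, with $t\to0$ and $t\to\infty$ interchanged. For small $t$, replace $\chi_\infty$ by $\1_{(4,\infty)}$ --- the difference is supported in $[2,4]$ and contributes a term that stays bounded as $t\to0$, hence is $O(t^{-1-m}\jap{\log t}^{-\alpha-1})$ there --- and apply Lemma~\ref{M} to get $P_\infty^{(m)}(t)=(-1)^m m!\,t^{-1-m}\abs{\log t}^{-\alpha}\bigl(1+O(\abs{\log t}^{-1})\bigr)$ as $t\to0$; comparing with $\bh_0(t)=t^{-1}(-\log t)^{-\alpha}$ for $t\le1/4$ cancels the leading terms. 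For $t$ bounded away from $0$, $\bh_0$ vanishes on $[1/2,\infty)$, while $\supp\chi_\infty\subset[2,\infty)$ forces $\abs{P_\infty^{(m)}(t)}\le C_m e^{-2t}$ for $t\ge2$; together with the smooth, bounded behaviour on $[1/2,2]$ this is absorbed into $C_m t^{-1-m}\jap{\log t}^{-\alpha-1}$. Summing the two, $\wt\bg$ satisfies \eqref{d8} with constants proportional to $\abs{\bb_0}+\abs{\bb_\infty}$.

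No step is a real obstacle: this is the routine bookkeeping lemma advertised in the text, reducing essentially to Lemmas~\ref{L} and~\ref{M} plus the elementary remark that $\frac{d^m}{dt^m}\bigl(t^{-1}\abs{\log t}^{\mp\alpha}\bigr)=(-1)^m m!\,t^{-1-m}\abs{\log t}^{\mp\alpha}\bigl(1+O(\abs{\log t}^{-1})\bigr)$. The only point deserving attention is precisely this cancellation: one must be sure that the principal asymptotics of $P_0^{(m)}$ (resp. $P_\infty^{(m)}$) and of $\bh_\infty^{(m)}$ (resp. $\bh_0^{(m)}$) agree to leading order, so that the difference genuinely gains the extra factor $\abs{\log t}^{-1}$ required by \eqref{d8}; the justification of differentiation under the integral, which is what turns $\calL\sigma_*$ into the integrals $I_m$ of those lemmas, is immediate.
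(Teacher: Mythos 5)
Your argument is correct and follows essentially the same route as the paper's proof: you split $\bh_*$ according to the two cut-offs, reduce the $t\to\infty$ behaviour of the $\chi_0$-piece to Lemma~\ref{L} and the $t\to0$ behaviour of the $\chi_\infty$-piece to Lemma~\ref{M} (after discarding exponentially small or bounded contributions from the transition regions), and cancel the leading terms against the derivatives of $t^{-1}\abs{\log t}^{-\alpha}$, exactly as in the paper. The only cosmetic difference is bookkeeping: the paper treats the cases $(\bb_0,\bb_\infty)=(0,1)$ and $(1,0)$ separately and takes a linear combination, which is the same decomposition as your $\wt\bg=\bb_\infty(P_0-\bh_\infty)+\bb_0(P_\infty-\bh_0)$.
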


\begin{proof}
First consider the case $\bb_0=0$, $\bb_\infty=1$. 
We have
\begin{equation}
\wt \bg(t)
=
\int_0^\infty \abs{\log\lambda}^{-\alpha}\chi_0(\lambda)e^{-\lambda t}d\lambda
-
t^{-1}\abs{\log t}^{-\alpha}\chi_\infty(t), 
\quad
t>0.
\label{d8a}
\end{equation}
It is clear that $\wt \bg\in C^\infty(\bbR_+)$ and that 
for all $m\in\bbZ_+$ we have $\wt \bg^{(m)}(t)=O(1)$ as $t\to0$. 
Thus, it suffices to check the estimates
\eqref{d8} for $t\to \infty$. 
Let us split the integral in \eqref{d8a} into the sum of the integrals 
over $(0,1/4)$ and $(1/4,1)$. 
The integral over $(1/4,1)$ (along with all of its derivatives in $t$) decays exponentially fast
as $t\to\infty$.  Since $\chi_0(\lambda)=1$ for $\lambda\leq 1/4$,  we have
\begin{multline*}
\biggl(\frac{d}{dt}\biggr)^m
\biggl(
\int_0^{1/4}\abs{\log \lambda}^{-\alpha}  \chi_0(\lambda) e^{-\lambda t} d\lambda
-
t^{-1}(\log t)^{-\alpha}
\biggr)
\\
= (-1)^m  
\biggl(
\int_0^{1/4}\abs{\log \lambda}^{-\alpha}\lambda^m  e^{-\lambda t} d\lambda
-
m! t^{-1-m}(\log t)^{-\alpha}
\biggr)+
O(t^{-1-m}(\log t)^{-\alpha-1})
\end{multline*}
as $t\to\infty$.
Using Lemma~\ref{L}, we see that the first term in the right-hand side
is also $O(t^{-1-m}(\log t)^{-\alpha-1})$.

Similarly, in  the case $\bb_0=1$, $\bb_\infty=0$  we have
\begin{equation}
\wt \bg(t)
=
\int_0^\infty \abs{\log\lambda}^{-\alpha}\chi_\infty(\lambda)e^{-\lambda t}d\lambda
-
t^{-1}\abs{\log t}^{-\alpha}\chi_0(t), 
\quad
t>0.
\label{d8b}
\end{equation}
Evidently, $\wt \bg\in C^\infty(\bbR_+)$, and this function, along with all of its
derivatives in $t$, decays exponentially fast as $t\to\infty$. 
So one only needs to prove the estimates \eqref{d8} for $t\to0$. 
We split the integral in \eqref{d8b} into the sum of the integrals over $(0,4)$ and $(4,\infty)$. 
The integral over $(0,4)$ is a   function of $t$ bounded with all its derivatives as $t\to 0$.
Differentiating the integral over $(4,\infty)$  and
applying Lemma~\ref{M} to  it, we  complete the proof of \eqref{d8}.

Finally, the general case is a linear combination of the two cases considered above. 
 \end{proof}

\subsection{Proof of Theorem~\ref{thm.d1}}

By the hypothesis of the theorem, we have the representation 
$$
\bh=\bb_0\bh_0+\bb_\infty \bh_\infty+\bg,
$$
where $\bg$ satisfies the hypothesis of Theorem~\ref{thm.a2} (singular value estimates). 
As above, let the function $\sigma_*$ be given by \eqref{d6}, and let $\bh_{*}=\calL\sigma_*$ be 
 its Laplace transform.
Using Lemma~\ref{lma.d4}, we obtain that the difference 
$$
\bh-\bh_*=\bg-\wt \bg
$$
also satisfies the hypothesis of Theorem~\ref{thm.a2}. 
Thus, 
\begin{equation}
s_n(\bGank(\bh-\bh_*))=o(n^{-\alpha}), \quad n\to\infty.
\label{eq:We}
\end{equation}
Let us now apply the abstract Lemma~\ref{lma.b1} to $A=\bGank(\bh_*)$ 
and $B=\bGank(\bh-\bh_*)$.
Then the desired result for the operator $\bGank(\bh)=A+B$  follows from Lemma~\ref{lma.c0} and from \eqref{eq:We}. 
\qed

\subsection{Matrix valued kernels}\label{MV}
Let $N\in\bbN$, and let $\bh$ be an $N\times N$ matrix valued function 
on $(0,\infty)$. The  Hankel operator $\bGank(\bh)$ 
in the space $L^2(\bbR_+,\bbC^N)$ is defined by the same formula \eqref{a5}
as in the scalar case. 
Such operators   appear, for example, in applications to systems theory,
see, e.g. \cite{GLP}. 

Theorem~\ref{thm.d1} extends to the matrix case without 
difficulty. In this case, $\bh(t)$ is a Hermitian matrix for all $t>0$ 
(this ensures the self-adjointness of  $\bGank(\bh)$), and the coefficients
$\bb_0$, $\bb_\infty$ in \eqref{d1}, \eqref{d2} are also Hermitian matrices. 
Formula \eqref{d3a} for the asymptotic coefficients $\bc^\pm$ becomes
$$
\bc^\pm
=
v(\alpha)\Bigl(\Tr\bigl((\bb_0)^{1/\alpha}_\pm\bigr)+\Tr\bigl((\bb_\infty)_\pm^{1/\alpha}\bigr)\Bigr)^\alpha,
$$
where the matrices $(\bb_0)_\pm^{1/\alpha}$, $(\bb_\infty)_\pm^{1/\alpha}$ are defined in the sense of 
the standard functional calculus for Hermitian matrices. 

Let us comment on the proof of this statement.
Theorem~\ref{thm.a2} (singular value estimates) extends to the matrix   case trivially. 
Theorem~\ref{thm.d2} (reduction to $\Psi$DO) also extends to the case when $\sigma$ is a matrix 
valued function; in this case $\Psi$ is a $\Psi$DO acting on vector valued functions. 
This $\Psi$DO is given by the same formula \eqref{a9b} as in the scalar case with 
 the matrix valued 
  function  $\mathfrak s(\xi)$ and  the standard scalar valued 
  function  $\mathfrak b(x)$ defined by the same formulas \eqref{eq:Xx} and \eqref{eq:a9b} as before.
Finally, the extension of Theorem~\ref{thm.b2} (Weyl spectral asymptotics of $\Psi$DO) to the matrix valued case
is not quite trivial, but fortunately it has been proven in \cite{BS4} already 
in the matrix   case. 
Putting together these ingredients in the same way as in the scalar
case, one obtains the spectral asymptotics for the matrix valued kernels.

%%%%%%%%%%%%%%%%%%%%%%%%%%%%%%%%%%%%%%%%%%%%%%%%%
%%%%%%%%%%%%%%%%%%%%%%%%%%%%%%%%%%%%%%%%%%%%%%%%%
\section{Discrete case}\label{sec.e}
%%%%%%%%%%%%%%%%%%%%%%%%%%%%%%%%%%%%%%%%%%%%%%%%%
%%%%%%%%%%%%%%%%%%%%%%%%%%%%%%%%%%%%%%%%%%%%%%%%%

\subsection{Statement of the main result}
Below is our main result in the discrete case.
%%%%%%%%%%%%%%%%%%
\begin{theorem}\label{cr.a3}
%%%%%%%%%%%%%%%%%%
Let $\alpha>0$, $b_1, b_{-1} \in\bbR$, 
and let $h$ be a sequence of real numbers given by 
\begin{equation}
h(j) =(b_1+ (-1)^j b_{-1} )j^{-1}(\log j)^{-\alpha}+g_1(j)+(-1)^j g_{-1}(j), \quad j\geq2,
\label{a2a}
\end{equation}
where the error terms ${g}_{\pm 1}$ satisfy  conditions \eqref{ca7} 
for all $m=0,1,\dots,M(\alpha)$ 
$(M(\alpha)$ is defined in \eqref{c5}$)$. 
Then the eigenvalues of  the corresponding Hankel operator $\Gank(h)$ 
have the asymptotic behaviour
\begin{equation}
\lambda_n^\pm(\Gank(h))
=
c^\pm n^{-\alpha}+o(n^{-\alpha}), 
\quad
c^\pm=v(\alpha)\bigl((b_1)^{1/\alpha}_\pm+(b_{-1})^{1/\alpha}_\pm\bigr)^\alpha,
\label{e3}
\end{equation}
as $n\to\infty$, where $v(\alpha)$ is given by \eqref{a4}. 
\end{theorem}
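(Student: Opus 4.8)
The plan is to mirror the structure of the proof of Theorem~\ref{thm.d1} in the continuous case, replacing the Laplace transform by the moment map of Theorem~\ref{thm.e2} and using Theorem~\ref{thm.a1} in place of Theorem~\ref{thm.a2}. First I would observe that, by the splitting \eqref{a4a}, it suffices to understand the building blocks $\Gank(h_1)$ and $\Gank(h_{-1})$, but more efficiently one constructs a single \emph{model sequence} $h_*(j)$ whose leading asymptotics match those of $h(j)$ and whose corresponding Hankel operator has computable spectral asymptotics. To that end I would pick a model weight $\eta_*(\mu)$ on $(-1,1)$ with logarithmic singularities at the two endpoints $\mu=\pm1$, of the form $\eta_*(\mu) = b_1 |\log(1-\mu)|^{-\alpha}\psi_1(\mu) + b_{-1}|\log(1+\mu)|^{-\alpha}\psi_{-1}(\mu)$, where $\psi_{\pm1}$ are smooth cut-offs localising near $\mu=\pm1$; the singularity at $\mu=1$ should produce the $j^{-1}(\log j)^{-\alpha}$ part and the singularity at $\mu=-1$ the $(-1)^j j^{-1}(\log j)^{-\alpha}$ part of $h(j)$.

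The second step is the analogue of Lemmas~\ref{L}--\ref{lma.d4}: compute the asymptotics as $j\to\infty$ of the moments $h_*(j)=\int_{-1}^1\eta_*(\mu)\mu^j\,d\mu$ and of all their iterated differences. For the endpoint $\mu=1$ this is a Watson-type lemma: substituting $\mu = 1-x$ and using that $\mu^j = e^{j\log(1-x)}$ concentrates near $x\sim 1/j$, one gets $h_*(j) = b_1 j^{-1}(\log j)^{-\alpha}(1+O(|\log j|^{-1})) + (\text{similar from }\mu=-1)$, with the $\mu=-1$ endpoint contributing an extra factor $(-1)^j$ after the substitution $\mu=-1+x$. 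The iterated differences $h_*^{(m)}(j)$ acquire an extra factor $(\mu-1)^m$ (resp. $(\mu+1)^m$) inside the integral, hence decay like $j^{-1-m}(\log j)^{-\alpha}$ as required; crucially the remainder is $O(j^{-1-m}(\log j)^{-\alpha-1})$, one logarithmic power better. Thus $h - h_* = g_1 - \wt g_1 + (-1)^j(g_{-1}-\wt g_{-1})$ where both correction sequences satisfy \eqref{ca7} for $m=0,\dots,M(\alpha)$, so by Theorem~\ref{thm.a1} (applied to each parity component, using \eqref{a4a} to handle the $(-1)^j$ factor) we get $s_n(\Gank(h-h_*)) = o(n^{-\alpha})$.

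The third step computes the spectral asymptotics of the model operator. By Theorem~\ref{thm.e2}, $\Gank(h_*)$ is unitarily equivalent to $\Psi_* = \fb(X)\fs_*(D)\fb(X)$ with $\fs_*(\xi) = \eta_*\bigl(\tfrac{2e^{-\xi}-1}{2e^{-\xi}+1}\bigr)$. One checks that $\tfrac{2e^{-\xi}-1}{2e^{-\xi}+1}\to 1$ as $\xi\to+\infty$ with $1 - \tfrac{2e^{-\xi}-1}{2e^{-\xi}+1} \sim e^{-\xi}$, so $|\log(1-\mu)|^{-\alpha}$ becomes $|\xi|^{-\alpha}(1+o(1))$; similarly $\to -1$ as $\xi\to-\infty$ with $1+\mu \sim e^{\xi}$, giving $|\xi|^{-\alpha}(1+o(1))$ there. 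Hence $\fs_*$ satisfies \eqref{b2} with $A(+\infty)=b_1$ and $A(-\infty)=b_{-1}$, and Theorem~\ref{thm.b2} gives $\lambda_n^\pm(\Psi_*) = C^\pm n^{-\alpha} + o(n^{-\alpha})$ with $C^\pm = (2\pi)^{-\alpha}\bigl((b_1)_\pm^{1/\alpha}+(b_{-1})_\pm^{1/\alpha}\bigr)^\alpha\bigl(\int_\bbR \fb(x)^{2/\alpha}dx\bigr)^\alpha$; the same computation \eqref{d10a} as in the continuous case identifies this with $c^\pm = v(\alpha)\bigl((b_1)_\pm^{1/\alpha}+(b_{-1})_\pm^{1/\alpha}\bigr)^\alpha$. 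Finally, applying Lemma~\ref{lma.b1} with $A = \Gank(h_*)$ and $B = \Gank(h-h_*)$ yields \eqref{e3}.

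The main obstacle is the second step: getting the asymptotics of the moments $h_*(j)$ \emph{together with} sharp control of all $M(\alpha)$ iterated differences, with a remainder that gains a logarithmic power. Unlike the Laplace case, the difference operator does not act as cleanly as differentiation, so one must be careful that $h_*^{(m)}(j)$ genuinely corresponds to $\int(\mu-1)^m\eta_*(\mu)\mu^j d\mu$ up to acceptable errors (e.g. by writing $\mu^{j+1}-\mu^j = (\mu-1)\mu^j$ and iterating, which is in fact exact), and that the cut-offs $\psi_{\pm1}$ and the change of endpoint near $\mu=-1$ do not spoil the $(-1)^j$ structure. A secondary technical point is verifying that the part of $\eta_*$ supported away from $\mu=\pm1$ contributes a sequence decaying faster than any $j^{-N}$ (since $|\mu|<1$ strictly there), which is immediate but should be stated.
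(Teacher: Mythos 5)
Your proposal is correct and follows essentially the same route as the paper's proof: a model sequence $h_*$ given by the moments \eqref{e19} of a weight with logarithmic singularities at $\mu=\pm1$ (your $\eta_*$ differs from the paper's choice \eqref{e8} only by terms that are one logarithmic power smaller, which is harmless), reduction of $\Gank(h_*)$ to the $\Psi$DO $\fb(X)\fs_*(D)\fb(X)$ via Theorem~\ref{thm.e2}, Weyl asymptotics from Theorem~\ref{thm.b2}, and then Theorem~\ref{thm.a1} together with \eqref{a4a} and Lemma~\ref{lma.b1} to transfer the asymptotics to $\Gank(h)$. The one point where you genuinely deviate is the moment-asymptotics step: you propose a direct Watson-type analysis at the endpoints combined with the exact identity $h_*^{(m)}(j)=\int_{-1}^1(\mu-1)^m\eta_*(\mu)\mu^j\,d\mu$, whereas the paper substitutes $\mu=\pm e^{-\lambda}$ to reduce the moments to Laplace transforms (Lemma~\ref{L}), obtains derivative bounds for a continuous interpolant, and converts them into iterated-difference bounds via the averaging formula of Lemma~\ref{lma.A2}; your route is workable, but note that you still must compare the $m$-th differences of $h_*$ with the $m$-th differences of the explicit leading sequence $(b_1+(-1)^jb_{-1})j^{-1}(\log j)^{-\alpha}$ (for which an argument like Lemma~\ref{lma.A2} is the cleanest tool), so the two approaches end up close in substance. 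Finally, a small slip: under $\mu=\frac{2e^{-\xi}-1}{2e^{-\xi}+1}$ one has $\mu\to-1$ (not $+1$) as $\xi\to+\infty$, with $1+\mu\sim 4e^{-\xi}$, and $\mu\to1$ as $\xi\to-\infty$ with $1-\mu\sim e^{\xi}$; thus $\fs_*$ satisfies \eqref{b2} with $A(+\infty)=b_{-1}$ and $A(-\infty)=b_1$, the opposite of what you wrote. Since the coefficient \eqref{b5} is symmetric in $A(\pm\infty)$, this does not affect the final formula \eqref{e3}.
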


Theorem~\ref{thm.aa1} is a particular case of the last theorem  corresponding to 
$g_1=g_{-1}=0$. 
Observe that if $g_{-1}$ satisfies  \eqref{ca7} with some $m>0$, then the sequence
$(-1)^j g_{-1}(j)$ does not necessarily satisfy the same condition. Thus, the two 
term remainder $g_1+(-1)^j g_{-1}$ in \eqref{a2a} in general does not reduce to
one term $g_1$.

Just as in the continuous case (see Subsection~\ref{MV}), one can consider 
Hankel operators in $\ell^2(\bbZ_+,\bbC^N)$ defined by sequences 
$\{h(j)\}_{j=0}^\infty$ of Hermitian $N\times N$ matrices. 
In this case, the asymptotic coefficients $b_{\pm 1}$ in \eqref{a2a} are also Hermitian  matrices, and formula \eqref{e3}  holds true 
with the asymptotic coefficient  
$$
c^\pm
=
v(\alpha)\Bigl(\Tr\bigl((b_1)^{1/\alpha}_\pm\bigr)+\Tr\bigl((b_{-1})_\pm^{1/\alpha}\bigr)\Bigr)^\alpha.
$$

Let  us describe the plan of the proof of Theorem~\ref{cr.a3}. We follow the same steps as in Section~\ref{sec.d}, 
but instead of the Laplace transform $\bh_*=\calL \sigma_*$ of the function $\sigma(\lambda)$, $\lambda>0$, we consider the sequence of moments 
\begin{equation}
h_*(j)=\int_{-1}^1 \eta_*(\mu)\mu^j d\mu, \quad j\geq0 ,
\label{e19}
\end{equation}
of some  explicit function $ \eta_*(\mu)$ of $\mu\in(-1,1)$. Our model operator is $\Gank(h_*)$. With our choice of $ \eta_*(\mu)$,  the difference $h-h_{*}$ satisfies the assumptions of Theorem~\ref{thm.a1} (singular value estimates).
Therefore the eigenvalues of the Hankel operators $\Gank(h)$ and $\Gank(h_{*})$ 
have the same asymptotic behaviour.
 Next,   Theorem~\ref{thm.e2} implies that the Hankel operator $\Gank(h_*)$ is unitarily equivalent to the $\Psi$DO $\Psi_{*} = \fb(X) \fs_*(D)\fb(X)$
in $L^2(\bbR)$ where $\fb(x)$ is the standard function \eqref{eq:Xx} and
\begin{equation}
\fs_{*}(\xi)=\eta_*\left(\frac{2e^{-\xi}-1}{2e^{-\xi}+1}\right), 
\quad \xi\in\bbR.
\label{eq:ss}
\end{equation}
 Theorem~\ref{thm.b2} (Weyl spectral asympotics for $\Psi$DO) 
 allows us to find  spectral asymptotics of the operators $\Psi_{*}$ and hence of $\Gank(h_*)$.

%%%%%%%%%%%%%%%%%%%%%%%%%
\subsection{The model operator}
%%%%%%%%%%%%%%%%%%%%%%%%%
Let us define the function $\eta_*(\mu)$ by
 the following explicit formula:
\begin{equation}
\eta_*(\mu)
=
\Abs{\log \frac{1+\mu}{2(1-\mu)}}^{-\alpha}
\left(
b_1\chi_\infty\Big(\frac{1+\mu}{2(1-\mu)}\Big)
+
b_{-1}\chi_0\Big(\frac{1+\mu}{2(1-\mu)}\Big)
\right), \quad \mu\in(-1,1),
\label{e8}
\end{equation}
where the smooth cut-off functions $\chi_\infty$ and $\chi_0$ are given by equalities \eqref{a7a}.
Note that the function $\eta_*$ belongs to the class $C^\infty (-1,1)$ and $\eta_*(\mu)\to 0$   logarithmically as $\mu\to 1$ and as $\mu\to -1$.

%%%%%%%%%%%%%%%%%%%%
\begin{lemma}\label{lma.e0}
%%%%%%%%%%%%%%%%%%%%
Let $h_* (j)$ be the sequence of moments  \eqref{e19} of the function \eqref{e8}.
Then the eigenvalues of the  Hankel operator $\Gank(h_*)$
obey the asymptotic relation
\begin{equation}
\lambda_n^\pm(\Gank(h_*))
=
c^\pm n^{-\alpha}+o(n^{-\alpha}), 
\quad 
n\to\infty,
\label{e18}
\end{equation}
where the coefficients $c^\pm$ are given by \eqref{e3}. 
\end{lemma}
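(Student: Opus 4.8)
The plan is to follow the same three-step scheme as in the proof of Lemma~\ref{lma.c0}, with Theorem~\ref{thm.e2} now playing the role of Theorem~\ref{thm.d2}. First I would apply Theorem~\ref{thm.e2}: since $h_*(j)$ is the sequence of moments of the bounded function $\eta_*$, the Hankel operator $\Gank(h_*)$ is unitarily equivalent to the $\Psi$DO $\Psi_*=\fb(X)\fs_*(D)\fb(X)$ in $L^2(\bbR)$, with $\fs_*$ given by \eqref{eq:ss}; in particular $\lambda_n^\pm(\Gank(h_*))=\lambda_n^\pm(\Psi_*)$ for all $n$.

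The only nontrivial point is to bring $\fs_*$ into the form covered by Theorem~\ref{thm.b2}. Writing $\mu=\frac{2e^{-\xi}-1}{2e^{-\xi}+1}$, an elementary computation gives the identity $\frac{1+\mu}{2(1-\mu)}=e^{-\xi}$, so that $\abs{\log\frac{1+\mu}{2(1-\mu)}}=\abs{\xi}$ and the definition \eqref{e8} of $\eta_*$ collapses to
$$
\fs_*(\xi)=\abs{\xi}^{-\alpha}\bigl(b_1\chi_\infty(e^{-\xi})+b_{-1}\chi_0(e^{-\xi})\bigr), \quad \xi\in\bbR.
$$
By the definitions \eqref{a7a}, one has $\chi_\infty(e^{-\xi})=1$ for $\xi\le-\log 4$ and $\chi_0(e^{-\xi})=1$ for $\xi\ge\log 4$, so $\fs_*\in C^\infty(\bbR)$ satisfies \eqref{b2} with $A(-\infty)=b_1$ and $A(+\infty)=b_{-1}$.

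I would then apply Theorem~\ref{thm.b2} with $a=\fs_*$ and $b=\fb$; since $\fb(x)=(\pi/\cosh(\pi x))^{1/2}$ decays exponentially, the bound \eqref{b1a} holds for every $\rho>0$. This yields $\lambda_n^\pm(\Psi_*)=C^\pm n^{-\alpha}+o(n^{-\alpha})$ with $C^\pm$ given by formula \eqref{b5}, which, using $\abs{\fb(x)}^{2/\alpha}=(\pi(\cosh(\pi x))^{-1})^{1/\alpha}$ and the values of $A(\pm\infty)$ found above, is precisely the expression \eqref{d10} with $b_1,b_{-1}$ in place of $\bb_0,\bb_\infty$. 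The Beta-function computation \eqref{d10a} already carried out in the continuous case then identifies the integral factor with $v(\alpha)$, so that $C^\pm=c^\pm$ as in \eqref{e3}. Combining this with $\lambda_n^\pm(\Gank(h_*))=\lambda_n^\pm(\Psi_*)$ gives \eqref{e18}.

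There is no real obstacle here: the machinery of Theorems~\ref{thm.e2} and \ref{thm.b2} does all the work, and the two things to check — the conformal identity $\frac{1+\mu}{2(1-\mu)}=e^{-\xi}$ and the book-keeping of which cut-off survives as $\xi\to\pm\infty$ — are routine. The one point deserving a moment's attention is matching the signs: it is $\chi_\infty$ (active near $\xi=-\infty$) that carries the coefficient $b_1$, so $A(-\infty)=b_1$, while $A(+\infty)=b_{-1}$; this is simply a matter of reading off \eqref{a7a}.
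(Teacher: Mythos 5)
Your proposal is correct and follows essentially the same route as the paper: apply Theorem~\ref{thm.e2} to get the unitary equivalence with $\Psi_*=\fb(X)\fs_*(D)\fb(X)$, verify via the identity $\frac{1+\mu}{2(1-\mu)}=e^{-\xi}$ that $\fs_*(\xi)=\abs{\xi}^{-\alpha}(b_1\chi_\infty(e^{-\xi})+b_{-1}\chi_0(e^{-\xi}))$, apply Theorem~\ref{thm.b2}, and identify the constant with $c^\pm$ through the computation \eqref{d10a}. Your extra remark on which cut-off survives at $\xi\to\pm\infty$ (so $A(-\infty)=b_1$, $A(+\infty)=b_{-1}$) is accurate, and since \eqref{b5} is symmetric in $A(\pm\infty)$ it does not affect the final coefficient.
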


\begin{proof}
Let us use Theorem~\ref{thm.e2} with $\eta=\eta_*$. 
We get that the corresponding Hankel operator  $\Gank(h_*)$ is unitarily equivalent to the $\Psi$DO
$
\Psi_{*}
=
\fb(X)\fs_{*}(D)\fb(X),
$
  where the functions $\fb(x)$ and $\fs_{*}(\xi)$ are given by formulas \eqref{eq:Xx} and \eqref{eq:ss}, respectively. 
By the definition \eqref{e8} of $\eta_*$, we have
$$
\fs_{*}(\xi)=\abs{\xi}^{-\alpha}(b_1\chi_\infty(e^{-\xi})+b_{-1}\chi_0(e^{-\xi})), 
\quad 
\xi\in\bbR.
$$
Applying Theorem~\ref{thm.b2} to the $\Psi$DO  $\Psi_{*}$, we obtain
$$
\lambda_n^\pm(\Gank(h_*))
=
\lambda_n^\pm(\Psi_*)
=
C^\pm n^{-\alpha}
+
o(n^{-\alpha}),
\quad
n\to\infty,
$$
where 
$$
C^\pm
=
(2\pi)^{-\alpha}
\bigl((b_1)_\pm^{1/\alpha}+(b_{-1})_\pm^{1/\alpha}\bigr)^\alpha
\biggl(\int_{-\infty}^\infty(\pi(\cosh(\pi x))^{-1})^{1/\alpha} dx\biggr)^\alpha.
$$
It follows from \eqref{d10a}  that  $C^\pm=c^\pm$, 
where the numbers $c^\pm$ are   given by  \eqref{e3}.
\end{proof}

%%%%%%%%%%%%%%%%%%%%%%%%%%%%%%%%%%%%%%%%%%%%%%%
\subsection{Moments of functions with logarithmic singularities}
%%%%%%%%%%%%%%%%%%%%%%%%%%%%%%%%%%%%%%%%%%%%%%%

 Our goal here is to obtain the asymptotics of the sequence $h_*$ 
of moments of the function $\eta_*$. We use again Lemma~\ref{L} but 
in order to replace the continuous parameter $t$ with the discrete one $j$, 
we need the following simple statement.

%%%%%%%%%%%%%%%%%
\begin{lemma}\label{lma.A2}
%%%%%%%%%%%%%%%%%
Let $m\in\bbZ_+$, and let  $\mathbf{g}\in C^m(\bbR_+)$ be a function that satisfies the estimate
$$
\abs{\bg^{(m)}(t)}\leq C t^{-1-m}(\log t)^{-\alpha}, \quad t\geq 2.
$$
Let $\{g(j)\}_{j=2}^\infty$ be a sequence defined by $g(j)=\bg(j)$, $j\geq2$. 
Then  
$$
\abs{g^{(m)}(j)}
\leq 
C j^{-1-m}(\log j)^{-\alpha}, 
\quad j\geq 2.
$$
\end{lemma}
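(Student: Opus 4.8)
The plan is to pass from the differential estimate on $\bg^{(m)}$ to the difference estimate on $g^{(m)}$ via the standard representation of the finite difference operator as an iterated integral of the $m$-th derivative. Concretely, for a function $\bg\in C^m(\bbR_+)$ one has the identity
\begin{equation}
g^{(m)}(j)
=
(\Delta^m \bg)(j)
=
\int_0^1\!\!\cdots\int_0^1 \bg^{(m)}(j+s_1+\dots+s_m)\, ds_1\cdots ds_m ,
\label{eq:diffint}
\end{equation}
which is proved by induction on $m$: the case $m=0$ is trivial, and $g^{(m)}(j)=g^{(m-1)}(j+1)-g^{(m-1)}(j)=\int_0^1 \tfrac{d}{ds}g^{(m-1)}(j+s)\,ds$, into which one substitutes the inductive hypothesis for $g^{(m-1)}$ applied to the shifted function $\bg(\cdot+s)$.

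From \eqref{eq:diffint} I would estimate directly: for $j\geq 2$ and $s_1,\dots,s_m\in[0,1]$ the argument $t=j+s_1+\dots+s_m$ satisfies $j\leq t\leq j+m\leq (m+1)j$ and $t\geq 2$, so the hypothesis gives
$$
\abs{\bg^{(m)}(j+s_1+\dots+s_m)}
\leq
C\,(j+s_1+\dots+s_m)^{-1-m}(\log(j+s_1+\dots+s_m))^{-\alpha}
\leq
C\, j^{-1-m}(\log j)^{-\alpha},
$$
where in the last step I use $t^{-1-m}\leq j^{-1-m}$ (since $t\geq j$) and, because $\log$ is increasing and $j\geq 2$ makes $\log j\geq\log 2>0$, the bound $(\log t)^{-\alpha}\leq(\log j)^{-\alpha}$ when $\alpha>0$. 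Integrating this bound over the unit cube $[0,1]^m$ (of volume $1$) yields $\abs{g^{(m)}(j)}\leq C j^{-1-m}(\log j)^{-\alpha}$ for all $j\geq 2$, which is the claim.

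There is essentially no obstacle here; the only points requiring a word of care are the monotonicity of $t\mapsto t^{-1-m}(\log t)^{-\alpha}$ on $[2,\infty)$ — which is immediate since both factors are decreasing there and positive (using $\alpha>0$ and $\log 2>0$) — and the fact that $\bg$ need only be defined and $C^m$ on $(0,\infty)$ with the estimate assumed merely for $t\geq 2$, so that all arguments $j+s_1+\dots+s_m$ appearing in \eqref{eq:diffint} for $j\geq 2$ indeed lie in the range where the hypothesis is available. If one prefers to avoid the integral representation entirely, the same conclusion follows from $m$ applications of the mean value theorem, writing $g^{(m)}(j)=\bg^{(m)}(\theta_j)$ for some $\theta_j\in(j,j+m)$ and bounding as above; I would mention this as an alternative but carry out the integral version since it needs no extra hypotheses.
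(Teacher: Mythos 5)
Your proof is correct and follows essentially the same route as the paper: the paper's entire argument is the iterated-integral representation $g^{(m)}(j)=\int_0^1\cdots\int_0^1 \bg^{(m)}(j+t_1+\cdots+t_m)\,dt_1\cdots dt_m$, checked by induction on $m$, with the estimate then being immediate. Your additional remarks (monotonicity of $t^{-1-m}(\log t)^{-\alpha}$ on $[2,\infty)$, and the mean value theorem alternative) are fine but just spell out what the paper leaves implicit.
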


\begin{proof}
It suffices to use the explicit formula
$$
g^{(m)}(j)
=
\int_0^1 dt_1 
\int_0^1 dt_2
\cdots
\int_0^1 dt_m \, 
\bg^{(m)}(j+t_1+\cdots+t_m),
$$
which can be checked by induction in $m$.
\end{proof}

The following assertion plays the same role here as  Lemma~\ref{lma.d4} played in the
previous Section.

%%%%%%%%%%%%%%%%%%%
\begin{lemma}\label{lma.e3}
%%%%%%%%%%%%%%%%%%%
Let the sequence $h_*$ be defined by \eqref{e19}, \eqref{e8}. 
Then $h_* (j)$ has the  asymptotics 
\begin{equation}
h_{*}(j) =(b_1+  (-1)^j b_{-1})j^{-1}(\log j)^{-\alpha}+ \wt g_{1}(j)+(-1)^j \wt g_{-1}(j), \quad j\geq2,
\label{eq:a2a}
\end{equation}
where the error terms $\wt g_{\pm 1}(j)$  satisfy the estimates
\begin{equation}
 \wt g_{\pm 1}^{(m)} (j)
=O(j^{-1-m}(\log j)^{-\alpha-1}), \quad j\to\infty,
\label{e10}
\end{equation}
for all $m=0,1,2,\dots$. 
\end{lemma}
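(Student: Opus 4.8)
The plan is to follow the scheme of the proof of Lemma~\ref{lma.d4}, with the Laplace transform replaced by the moment map and with Lemma~\ref{lma.A2} used to pass from asymptotics in a continuous variable to asymptotics of the sequence. Because of the locations of the cut-offs \eqref{a7a} in the definition \eqref{e8} of $\eta_*$, its two summands have disjoint supports: the one carrying $b_1$, namely $b_1\abs{\log\tfrac{1+\mu}{2(1-\mu)}}^{-\alpha}\chi_\infty(\tfrac{1+\mu}{2(1-\mu)})$, is supported in $\{\mu>3/5\}$, while the one carrying $b_{-1}$ is supported in $\{\mu<0\}$. Hence $h_*(j)=A(j)+(-1)^jB(j)$, where after the substitutions $\mu=e^{-\tau}$ in the first integral and $\mu=-e^{-\tau}$ in the second (legitimate for integer $j$, since then $\mu^j=(-1)^je^{-j\tau}$) one has
\[
A(t)=\int_0^{\log(5/3)}\Phi(\tau)e^{-t\tau}\,d\tau,\qquad B(t)=\int_0^\infty\Psi(\tau)e^{-t\tau}\,d\tau,
\]
with $\Phi(\tau)=b_1\abs{\log\tfrac12\coth(\tau/2)}^{-\alpha}\chi_\infty(\tfrac12\coth(\tau/2))e^{-\tau}$ and $\Psi(\tau)=b_{-1}\abs{\log\tfrac12\tanh(\tau/2)}^{-\alpha}\chi_0(\tfrac12\tanh(\tau/2))e^{-\tau}$. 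Both functions are smooth on $(0,\infty)$, bounded, and vanish at $\tau=0$; $\Phi$ is supported in $[0,\log(5/3)]$ and $\Psi$ decays exponentially. In particular $A$ and $B$ are defined and smooth for all real $t>0$, and $\frac{d^m}{dt^m}$ merely inserts a factor $(-\tau)^m$ under the integral sign.

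Near $\tau=0$ one has $\chi_\infty(\tfrac12\coth(\tau/2))=1$, $\chi_0(\tfrac12\tanh(\tau/2))=1$ and, using $\tfrac12\coth(\tau/2)=\tau^{-1}(1+O(\tau^2))$, $\tfrac12\tanh(\tau/2)=\tfrac\tau4(1+O(\tau^2))$ and $e^{-\tau}=1+O(\tau)$,
\[
\Phi(\tau)=b_1(-\log\tau)^{-\alpha}+E_1(\tau),\qquad\Psi(\tau)=b_{-1}(-\log\tau)^{-\alpha}+E_{-1}(\tau),
\]
where $E_1(\tau)=O(\tau\abs{\log\tau}^{-\alpha})$ and $E_{-1}(\tau)=O(\abs{\log\tau}^{-\alpha-1})$ as $\tau\to0$; the extra power of $\tau$ in $E_1$ comes from the factor $e^{-\tau}$, while the extra power of $\log$ in $E_{-1}$ comes from the additive constant $\log4$ produced by $\tfrac12\tanh(\tau/2)\sim\tau/4$. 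Now Lemma~\ref{L}, applied to the leading parts (after localising near $\tau=0$ with a smooth cut-off, the far-away contributions being exponentially small), gives
\[
b_{\pm1}\int_0^c(-\log\tau)^{-\alpha}(-\tau)^m e^{-t\tau}\,d\tau=(-1)^mb_{\pm1}\,m!\,t^{-1-m}(\log t)^{-\alpha}+O(t^{-1-m}(\log t)^{-\alpha-1}),
\]
which coincides with $\frac{d^m}{dt^m}\bigl(b_{\pm1}t^{-1}(\log t)^{-\alpha}\bigr)$ up to $O(t^{-1-m}(\log t)^{-\alpha-1})$, since differentiating $(\log t)^{-\alpha}$ only produces terms with more negative powers of $\log t$. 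For the error terms, $\int_0^cE_1(\tau)(-\tau)^m e^{-t\tau}\,d\tau$ is dominated, by Lemma~\ref{L} with $m$ replaced by $m+1$, by $O(t^{-m-2}(\log t)^{-\alpha})=O(t^{-1-m}(\log t)^{-\alpha-1})$, and $\int_0^cE_{-1}(\tau)(-\tau)^m e^{-t\tau}\,d\tau$ is dominated, by Lemma~\ref{L} with $\alpha$ replaced by $\alpha+1$, by $O(t^{-1-m}(\log t)^{-\alpha-1})$. Thus $\tilde g_1(t):=A(t)-b_1t^{-1}(\log t)^{-\alpha}$ and $\tilde g_{-1}(t):=B(t)-b_{-1}t^{-1}(\log t)^{-\alpha}$ are smooth with $\tilde g_{\pm1}^{(m)}(t)=O(t^{-1-m}(\log t)^{-\alpha-1})$ as $t\to\infty$ for every $m$, and Lemma~\ref{lma.A2} transfers these bounds to the corresponding sequences, which is precisely \eqref{eq:a2a}--\eqref{e10}.

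The only genuine difficulty lies in this last comparison: one must verify that both the discrepancy between $\frac{d^m}{dt^m}(t^{-1}(\log t)^{-\alpha})$ and the leading output of Lemma~\ref{L}, and the contributions of $E_{\pm1}$, really gain a full factor $(\log t)^{-1}$ rather than a factor $t^{-1}$ that would be spent again at each differentiation. Everything else---the disjointness of the supports, the elementary $\coth/\tanh$ expansions, the exponential smallness of the tails, and the cut-off bookkeeping needed to apply Lemma~\ref{L} on a neighbourhood of $\tau=0$---is routine and parallels the proof of Lemma~\ref{lma.d4}.
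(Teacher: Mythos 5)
Your proof is correct and follows essentially the same route as the paper's: the substitutions $\mu=\pm e^{-\tau}$ (using the disjoint supports created by the cut-offs) turn the moments into Laplace transforms of densities equal to $b_{\pm1}(-\log\tau)^{-\alpha}$ up to admissible errors near $\tau=0$, Lemma~\ref{L} then gives the large-$t$ asymptotics of all $t$-derivatives, and Lemma~\ref{lma.A2} converts these bounds into bounds on iterated differences. The only (harmless) difference is that you keep the sharper error $O(\tau\abs{\log\tau}^{-\alpha})$ for the $b_1$ piece, while the paper uses the weaker common bound $O(\abs{\log\tau}^{-\alpha-1})$ for both pieces.
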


\begin{proof} 
Let the function $\eta_{*} (\mu)$ be defined by \eqref{e8}.
If  $b_1=1$, $b_{-1}=0$, then   $\eta_{*} (\mu)=0$ for $\mu\leq 0$ and
setting $\mu=e^{-\lambda}$ in \eqref{e8}, we see that
$$
h_*(j) 
=
\int_0^1 
\Big(\log \frac{1+\mu}{2(1-\mu)}\Big)^{-\alpha}
\chi_\infty \Big(\frac{1+\mu}{2(1-\mu)}\Big)
\mu^j d\mu
 =
(\calL\sigma_1)(j)
$$
where
$$
\sigma_1(\lambda)
=
\Big(\log \frac{1+e^{-\lambda}}{2(1-e^{-\lambda})}\Big)^{-\alpha}
\chi_\infty \Big(\frac{1+e^{-\lambda}}{2(1-e^{-\lambda})}\Big)
e^{-\lambda}. 
$$
If  $b_1=0$, $b_{-1}=1$, then   $\eta_{*} (\mu)=0$ for $\mu\geq 0$ and
setting $\mu=-e^{-\lambda}$ in \eqref{e8}, we see that   $h_*(j)=(-1)^j (\calL\sigma_{-1})(j)$ where
$$
\sigma_{-1}(\lambda)
=
\Abs{\log \frac{1-e^{-\lambda}}{2(1+e^{-\lambda})}}^{-\alpha}
\chi_0 \Big(\frac{1-e^{-\lambda}}{2(1+e^{-\lambda})}\Big)
e^{-\lambda}.
$$
In both cases we have
$$
\sigma_{\pm 1}(\lambda)
=(-\log \lambda)^{-\alpha}
+O(\abs{\log\lambda}^{-\alpha-1}) 
$$
as $\lambda\to 0$.

Thus, we can write
\begin{equation}
(\calL\sigma_{\pm 1})(t)
=
\int_0^{1/2} (-\log \lambda)^{-\alpha} e^{-\lambda t}d\lambda
+
\int_0^\infty\wt{\sigma}_{\pm 1}(\lambda)e^{-\lambda t}d\lambda,
\label{eq:LL}\end{equation}
where the functions $\wt{\sigma}_{\pm 1}(\lambda)$ satisfy the estimate
$$
\abs{\wt{\sigma}_{\pm 1}(\lambda)}\leq C\jap{\log \lambda}^{-\alpha-1},
\quad \lambda>0,
$$
and vanish identically for large $\lambda$. 
Differentiating \eqref{eq:LL} and 
using Lemma~\ref{L},   we see that
\begin{equation}
(\calL\sigma_{\pm 1}) (t)
=
t^{-1}(\log t)^{-\alpha}\chi_\infty(t)
+
\bg_{\pm 1}(t),
\label{eq:LL1}\end{equation}
where the functions  $\bg_{\pm 1} (t)$ satisfy the estimates \eqref{d8} for $t\geq 2$.

 For arbitrary $b_1$, $b_{-1}$,  we have
$$
h_* (j)= b_{1}(\calL\sigma_{1})(j) + (-1)^j b_{-1}(\calL\sigma_{-1})(j) .
$$
According to \eqref{eq:LL1} this yields representation \eqref{eq:a2a} with $\wt{g}_{\pm 1} (j)= b_{\pm 1} \bg_{\pm 1}(j)$. Estimates \eqref{e10} follow from 
Lemma~\ref{lma.A2}. \end{proof}

%%%%%%%%%%%%%%%%%%%%%%%%%%%%%%%%%%%%%%
\subsection{Proof of Theorem~\ref{cr.a3}}
%%%%%%%%%%%%%%%%%%%%%%%%%%%%%%%%%%%%%%

Let $h$ be as in the hypothesis  of Theorem~\ref{cr.a3},
and let, as above, the sequence $h_*$ be defined by \eqref{e19}, \eqref{e8}. 
Comparing \eqref{a2a} and \eqref{eq:a2a}, we see that
\begin{equation}
h(j) =h_{*} (j)+ f_1(j)+(-1)^j f_{-1}(j) 
\label{eq:LL2}\end{equation}
where the error terms $f_{\pm 1}(j)= g_{\pm 1}(j)- \widetilde{g}_{\pm 1}(j)$ satisfy  the condition
$$
f_{\pm 1}^{(m)}(j)= o(j^{-1-m}  (\log j)^{-\alpha}), \quad j\to\infty,
$$
for all $m=0,1,\dots,M(\alpha)$. According to  Theorem~\ref{thm.a1} (singular value estimates) we have
$s_n(\Gank(f_{\pm 1}))=o(n^{-\alpha}), \quad n\to\infty$.

Put $ \wt f_{-1}(j)=(-1)^j f_{-1}(j) $; then by the unitary equivalence  \eqref{a4a},  
one has $s_n(\Gank(\wt f_{-1}))=s_n(\Gank(f_{-1}))$ and hence
\begin{equation}
s_n(\Gank(f_1 + \wt f_{-1}))=o(n^{-\alpha}), \quad n\to\infty.
\label{e14+}
\end{equation}
Put $A=\Gank(h_*)$ and $B=\Gank(f_1 + \wt f_{-1})$ so that,  by \eqref{eq:LL2}, $\Gank(h)=A+B$.
In view of  \eqref{e18}  and \eqref{e14+}, we can apply the abstract Lemma~\ref{lma.b1} to these operators. This
 yields the eigenvalue asymptotics \eqref{e3} for $\Gank(h)$. 
\qed

\appendix
%%%%%%%%%%%%%%%%%%%%%%%%%%%%%%%%%%%%%%%%%%%
%%%%%%%%%%%%%%%%%%%%%%%%%%%%%%%%%%%%%%%%%%%
\section{Theorem~\ref{thm.b2} for   $\Psi$DO in $L^2({\bbR})$ }
%%%%%%%%%%%%%%%%%%%%%%%%%%%%%%%%%%%%%%%%%%%
%%%%%%%%%%%%%%%%%%%%%%%%%%%%%%%%%%%%%%%%%%%

 Theorem~\ref{thm.b2} was proven in \cite{BS3}
for compactly supported $b$.  
We only have to extend it to general $b$ satisfying the estimate \eqref{b1a}.

%%%%%%%%%%%%%%%%%%%%%%%%%%%%%%%%%%%%%%%%%%%
\subsection{Schatten class estimates}\label{sec.b1}
%%%%%%%%%%%%%%%%%%%%%%%%%%%%%%%%%%%%%%%%%%%
For $p>0$, the weak Schatten class $\Sch_{p,\infty}$ consists of compact operators $A$ such that 
$$
\norm{A}_{\Sch_{p,\infty}}:=
\sup_{n\geq 1} n^{1/p}s_n(A)<\infty.
$$
The functional $\norm{\cdot}_{\Sch_{p,\infty}}$ is a quasinorm on $\Sch_{p,\infty}$. 
Recall that the eigenvalue counting functions of self-adjoint compact operators was defined by formula \eqref{eq:CF}.
It is convenient to introduce the following functionals
for a self-adjoint operator $A\in \Sch_{p,\infty}$:
$$
\Delta_p^\pm(A)=\limsup_{\varepsilon\to0} \varepsilon^p n_\pm(\varepsilon,A),
\quad
\delta_p^\pm(A)=\liminf_{\varepsilon\to0} \varepsilon^p n_\pm(\varepsilon,A).
$$
In applications, one usually has $\Delta^\pm_p(A)=\delta^\pm_p(A)$, 
but it is technically convenient to analyse the functionals
$\Delta^\pm_p$ and $\delta^\pm_p$ separately. 
Observe that  for $\alpha>0$ and $p=1/\alpha$ the   relations
$$
\lim_{n\to\infty }n^\alpha\lambda^\pm_n(A) = C^\pm  
\quad \text{ and }\quad
\Delta_{p}^\pm(A)=\delta_{p}^\pm(A)=(C^\pm)^p
$$
are equivalent.
The functionals $\Delta_{p}^\pm$, $\delta_{p}^\pm$
are continuous in $\Sch_{p,\infty}$. In fact, one has
(see, e.g., \cite{BSbook}, formulas (11.6.10), (11.6.14), (11.6.15))
\begin{align}
\abs{(\Delta_{p}^\pm(A_1))^{\frac{1}{1+p}}
-(\Delta_{p}^\pm(A_2))^{\frac{1}{1+p}}}
\leq
\norm{A_1-A_2}^{\frac{1}{1+p}}_{\Sch_{p,\infty}},
\label{C5}
\\
\abs{(\delta_{p}^\pm(A_1))^{\frac{1}{1+p}}
-(\delta_{p}^\pm(A_2))^{\frac{1}{1+p}}}
\leq
\norm{A_1-A_2}^{\frac{1}{1+p}}_{\Sch_{p,\infty}}.
\label{C6}
\end{align}

Let $X$, $D$ be the operators in $L^2(\bbR)$ defined by \eqref{a8a}.
We need an estimate for operators of the form $f(X)g(D)$ in weak Schatten classes. 
This estimate uses lattice function classes. 
For $f\in L^2_{\loc}(\bbR)$, one writes
$$
v_f(n)=\left(\int_{n}^{n+1} \abs{f(x)}^2 dx\right)^{1/2}, \quad n\in\bbZ,
$$
and for $p>0$ one defines the lattice classes
\begin{align*}
f\in \ell^p(L^2) &\quad \Leftrightarrow \quad v_f\in \ell^p,
\\
f\in \ell^{p,\infty}(L^2) &\quad \Leftrightarrow \quad v_f\in \ell^{p,\infty}.
\end{align*}
Recall that 
$$
v\in \ell^{p,\infty}
\quad \Leftrightarrow \quad
\norm{v}_{\ell^{p,\infty}}=\sup_{\varepsilon >0}\varepsilon (\#\{n\in\bbZ: \abs{v(n)}>\varepsilon\})^{1/p}<\infty.
$$

The following theorem is a combination of ideas of Cwikel \cite{C} 
(who had a version for $p\geq2$) and  
Birman-Solomyak \cite[Theorem 11.1]{BS5}. 
The case $1<p\leq2$ appeared in \cite{Simon}.
Theorem~\ref{thm.C1} in the  form given below and the proof can be found, e.g., in \cite{BKS}.

%%%%%%%%%%%%%%%%%%%%%%
\begin{theorem}\cite{BKS}\label{thm.C1}
%%%%%%%%%%%%%%%%%%%%%%
Let $0<p\leq2$,  $f\in \ell^p(L^2)$, $g\in \ell^{p,\infty}(L^2)$. 
Then 
$f(X)g(D)\in \Sch_{p,\infty}$
and 
$$
\norm{f(X)g(D)}_{\Sch_{p,\infty}}
\leq 
C_p 
\norm{f}_{\ell^p(L^2)}\norm{g}_{\ell^{p,\infty}(L^2)}.
$$
\end{theorem}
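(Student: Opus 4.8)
The plan is to localise the estimate to a single cell of phase space, where the operator is essentially one-dimensional, and then to reassemble the pieces by a bilinear interpolation argument designed to respect the weak-$\ell^p$ hypothesis on $g$. Write $f=\sum_{n\in\bbZ}f_n$, $f_n=f\1_{[n,n+1)}$, so $\norm{f_n}_{L^2(\bbR)}=v_f(n)$, and $g=\sum_{m\in\bbZ}g_m$ with the splitting taken in the Fourier variable, so $\norm{g_m}_{L^2(\bbR)}=v_g(m)$; then $f(X)g(D)=\sum_{n,m}B_{n,m}$ with $B_{n,m}=f_n(X)g_m(D)$. The key local fact is that $B_{n,m}$ is trace class with rapidly decaying singular values: for every $N$,
\[
s_k(B_{n,m})\le C_N\,k^{-N}\,v_f(n)\,v_g(m),\qquad k\ge 1.
\]
Indeed, conjugating $B_{n,m}$ by the translation $e^{inD}$ and the modulation $e^{imX}$ carries it, by the Weyl commutation relation and up to a unimodular factor, to an operator $\tilde f(X)\tilde g(D)$ with $\tilde f$ supported in $[0,1)$, $\tilde g$ of Fourier support in $[0,1)$, $\norm{\tilde f}_2=v_f(n)$, $\norm{\tilde g}_2=v_g(m)$; for such an operator the bound is the classical time--frequency localisation estimate, since $\tilde f(X)\tilde g(D)=\tilde f(X)\bigl(\1_{[0,1)}(X)\1_{[0,1)}(D)\bigr)\tilde g(D)$ and the singular values of the concentration operator $\1_{[0,1)}(X)\1_{[0,1)}(D)$ decay faster than any power of $k$ (its phase-space cell has area $<2\pi$).

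From the building block one obtains the strong bilinear bounds
\[
\norm{f(X)g(D)}_{\Sch_r}\le C_r\,\norm{f}_{\ell^r(L^2)}\,\norm{g}_{\ell^r(L^2)},\qquad 0<r\le 2:
\]
for $0<r\le 1$ this follows from the quasi-triangle inequality $\norm{\sum_{n,m}B_{n,m}}_{\Sch_r}^r\le\sum_{n,m}\norm{B_{n,m}}_{\Sch_r}^r$ and $\norm{B_{n,m}}_{\Sch_r}\le C_r v_f(n)v_g(m)$; for $r=2$ it is the exact Hilbert--Schmidt identity $\norm{f(X)g(D)}_{\Sch_2}=(2\pi)^{-1/2}\norm f_{L^2}\norm g_{L^2}$; and for $1<r<2$ it is complex interpolation of the bilinear map $(f,g)\mapsto f(X)g(D)$ between these two endpoints.

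For the reassembly, fix $0<p<2$ and pick $0<r_0<p<r_1\le 2$. For a threshold $\tau>0$ split $f=f'+f''$ and $g=g'+g''$, putting the cells with $v_f(n)>\tau$, resp.\ $v_g(m)>\tau$, into $f'$, resp.\ $g'$. Since $v_f\in\ell^p$ and $v_g\in\ell^{p,\infty}$, one has the $K$-functional estimates $\norm{f'}_{\ell^{r_0}(L^2)}^{r_0}\le C\tau^{r_0-p}\norm f_{\ell^p(L^2)}^p$, $\norm{f''}_{\ell^{r_1}(L^2)}^{r_1}\le C\tau^{r_1-p}\norm f_{\ell^p(L^2)}^p$, and likewise for $g$ with $\norm g_{\ell^{p,\infty}(L^2)}$ replacing $\norm f_{\ell^p(L^2)}$. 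Expanding $f(X)g(D)$ into the four products and estimating $f'(X)g'(D)$ in $\Sch_{r_0}$ and the other three in $\Sch_{r_1}$ by the strong bounds (using also $\ell^p\subset\ell^{r_1}$, $\ell^{p,\infty}\subset\ell^{r_1}$), then bounding $n_\pm(\varepsilon,\cdot)$ on each summand by Chebyshev, $n_\pm(\delta,\abs A)\le\delta^{-r}\norm A_{\Sch_r}^r$, one arrives at $\varepsilon^p\,n_\pm(\varepsilon,\abs{f(X)g(D)})\le\Phi(\varepsilon,\tau)\bigl(\norm f_{\ell^p(L^2)}\norm g_{\ell^{p,\infty}(L^2)}\bigr)^p$; choosing $\tau$ an appropriate power of $\varepsilon$ makes $\sup_\varepsilon\Phi(\varepsilon,\tau)$ finite, which is the assertion. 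The endpoint $p=2$ is treated by a variant of the scheme in which the upper endpoint is replaced by a dyadic-level subdivision of $g$ feeding directly into the Hilbert--Schmidt identity.

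I expect the reassembly to be the main obstacle: the blocks $B_{n,m}$ are not mutually orthogonal, so their singular values cannot simply be collected, and a direct summation of $\norm{B_{n,m}}_{\Sch_p}$ would demand $v_g\in\ell^p$, which fails under the hypothesis. Balancing the four products, of differing homogeneities, against the single parameter $\tau$ is precisely the mechanism that yields the \emph{weak} ideal $\Sch_{p,\infty}$ (and shows it cannot be improved to $\Sch_p$). This is the argument of \cite{BKS}, building on Cwikel \cite{C} and Birman--Solomyak \cite{BS5}.
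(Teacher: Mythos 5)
The paper does not prove this statement at all: it is quoted verbatim from \cite{BKS}, so your argument has to stand on its own. Its first two stages are essentially sound and are indeed the Birman--Solomyak route: the lattice blocks $B_{n,m}=f_n(X)g_m(D)$ do satisfy $s_k(B_{n,m})\le C_Nk^{-N}v_f(n)v_g(m)$, and the strong bounds $\norm{f(X)g(D)}_{\Sch_r}\le C_r\norm{f}_{\ell^r(L^2)}\norm{g}_{\ell^r(L^2)}$, $0<r\le2$, follow as you say. One caveat on the block estimate: you cannot literally sandwich the concentration operator, because $\tilde f(X)$ and $\tilde g(D)$ are in general unbounded (only $L^2$ data), so $s_k(\tilde f(X)K\tilde g(D))\le\norm{\tilde f(X)}\,s_k(K)\,\norm{\tilde g(D)}$ is not available. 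The clean repair is to Taylor-expand $e^{i\xi x}$ in the kernel $\tilde f(x)\,\check{\tilde g}(x-y)$: the first $k$ terms give a rank-$k$ operator, and by Plancherel in $y$ the remainder has Hilbert--Schmidt norm $\le C\norm{\tilde f}_2\norm{\tilde g}_2/k!$, which yields the claimed rapid decay.

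The genuine gap is the reassembly, exactly where you anticipated trouble. Normalise $\norm{f}_{\ell^p(L^2)}=\norm{g}_{\ell^{p,\infty}(L^2)}=1$ and take $\tau=\varepsilon^\theta$. Your diagonal term contributes $\varepsilon^{p-r_0}\tau^{2(r_0-p)}$ to $\varepsilon^p n_\pm(\varepsilon)$, which is bounded only if $\theta\le\tfrac12$; but for the mixed terms the embeddings $\ell^p\subset\ell^{r_1}$, $\ell^{p,\infty}\subset\ell^{r_1}$ give no power of $\tau$ from the primed factor, so $f'(X)g''(D)$ and $f''(X)g'(D)$ contribute $\varepsilon^{p-r_1}\tau^{r_1-p}=(\tau/\varepsilon)^{r_1-p}$, bounded only if $\theta\ge1$. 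The two requirements are incompatible, and using two separate thresholds $\tau_f=\varepsilon^a$, $\tau_g=\varepsilon^b$ does not help ($a,b\ge1$ from the mixed terms versus $a+b\le1$ from the diagonal term). This is precisely the known failure of naive bilinear real interpolation at the weak endpoint, which is why Cwikel's theorem required a new idea. The standard fix, and in essence what \cite{BKS} does, is to threshold the \emph{product} sequence $w(n,m)=v_f(n)v_g(m)$, which lies in $\ell^{p,\infty}(\bbZ^2)$ with quasinorm $\le\norm{v_f}_{\ell^p}\norm{v_g}_{\ell^{p,\infty}}$: split $T=T'+T''$ according to $w>\tau$ or $w\le\tau$, estimate $T'$ in $\Sch_{r_0}$ with $r_0<\min(p,1)$ by the quasi-triangle inequality for $\Sch_{r_0}$, and $T''$ in $\Sch_{r_1}$, $p<r_1\le2$, using that $\norm{T''}_{\Sch_{r_1}}^{r_1}\le\sum_{n,m}\norm{B_{n,m}}_{\Sch_{r_1}}^{r_1}$ thanks to the mutual orthogonality of the ranges (in $n$) and of the initial subspaces (in $m$) of the blocks. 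Both pieces then scale as $\tau^{r_i-p}$ with the \emph{same} homogeneity in $(f,g)$, and the single choice $\tau\sim\varepsilon$ closes the argument for $0<p<2$; the case $p=2$ needs the dyadic refinement of the small part that you sketch. As it stands, your four-product splitting does not prove the theorem.
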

In fact, we need only a very special case of this theorem.

\subsection{Approximation arguments.}
Let us come back to Theorem~\ref{thm.b2}.
For $N\in\bbN$, let $\1_{(-N,N)}$ be the characteristic function of the interval $(-N,N)$. Set
$$
b_N(x)=b(x)\1_{(-N,N)}(x), 
\quad
\wt b_N(x)=b(x)-b_N(x), 
\quad
\Psi_N=b_N(X)a(D)b_N(X).
$$

%%%%%%%%%%%%%%%%%%%%%%
\begin{lemma}\label{appr}
%%%%%%%%%%%%%%%%%%%%%%
Under the assumptions of Theorem~$\ref{thm.b2}$ we have
\begin{equation}
\lim_{N\to \infty}\norm{\Psi-\Psi_N}_{\Sch_{p,\infty}}
= 0, \quad p=1/\alpha.
\label{b7}
\end{equation}
\end{lemma}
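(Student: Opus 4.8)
The plan is to decompose the difference $\Psi-\Psi_N$ into three pieces using the identity
$$
b(X)a(D)b(X)-b_N(X)a(D)b_N(X)
=
\wt b_N(X)a(D)b(X)+b_N(X)a(D)\wt b_N(X),
$$
and further $\wt b_N(X)a(D)b_N(X)+\wt b_N(X)a(D)\wt b_N(X)$ for the first term, so that every summand carries at least one factor $\wt b_N(X)=b(X)\1_{\abs{x}\geq N}(x)$. Since the quasinorm $\norm{\cdot}_{\Sch_{p,\infty}}$ satisfies a quasi-triangle inequality, it suffices to show that each of the three operators $\wt b_N(X)a(D)b(X)$, $b_N(X)a(D)\wt b_N(X)$, $\wt b_N(X)a(D)\wt b_N(X)$ tends to zero in $\Sch_{p,\infty}$ as $N\to\infty$, and for this it is enough to bound $\norm{\wt b_N(X)a(D)\,c(X)}_{\Sch_{p,\infty}}$ for $c\in\{b,b_N,\wt b_N\}$ (all of which are dominated by $\abs{b}$).

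Next I would invoke Theorem~\ref{thm.C1} with $p=1/\alpha$. The assumption $\rho>\alpha/2$ means $\abs{b(x)}\leq C\jap{x}^{-\rho}$ with $2\rho/ \alpha>1$, so the lattice sequence $v_b(n)=(\int_n^{n+1}\abs{b}^2)^{1/2}$ satisfies $v_b(n)=O(\jap{n}^{-\rho})$, hence $v_b\in\ell^p$ because $p\rho=\rho/\alpha>1/2\cdot\ldots$; more precisely $\sum_n \jap{n}^{-p\rho}<\infty$ since $p\rho>1$ is what one needs — and indeed $p\rho=\rho/\alpha>1/2$ is not quite it, so the correct observation is that $\rho>\alpha/2$ gives $2\rho>\alpha$, i.e. $v_b^2(n)=O(\jap{n}^{-2\rho})$ with $2\rho>\alpha=1/p$, equivalently $v_b\in\ell^{2\rho}\subset\ell^p$. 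For the symbol $a$, the asymptotics \eqref{b2} give $\abs{a(\xi)}\leq C\jap{\xi}^{-\alpha}$, so $v_a(n)=O(\jap{n}^{-\alpha})$ and thus $a\in\ell^{p,\infty}(L^2)$ with $p=1/\alpha$; note $a$ need only lie in the weak class, which is exactly what Theorem~\ref{thm.C1} demands of the second factor. Therefore $\wt b_N(X)a(D)\,c(X)\in\Sch_{p,\infty}$ with
$$
\norm{\wt b_N(X)a(D)c(X)}_{\Sch_{p,\infty}}
\leq C_p\norm{\wt b_N}_{\ell^p(L^2)}\,\norm{a}_{\ell^{p,\infty}(L^2)}\,\norm{c}_{\ell^\infty}\,,
$$
or, keeping the $\Sch_{p,\infty}$-operator-norm factorisation symmetric, one can alternatively bound by $\norm{\wt b_N}_{\ell^p(L^2)}\norm{a(D)c(X)}$ using that $a(D)c(X)$ is bounded. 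Either way the only $N$-dependent factor is $\norm{\wt b_N}_{\ell^p(L^2)}$.

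Finally, since $b\in\ell^p(L^2)$ (as established above) and $\wt b_N\to0$ pointwise with $\abs{\wt b_N}\leq\abs{b}$, dominated convergence in $\ell^p$ gives $\norm{\wt b_N}_{\ell^p(L^2)}=\bigl(\sum_{\abs{n}\geq N}v_b(n)^p\bigr)^{1/p}\to0$ as $N\to\infty$. Feeding this into the three estimates above and using the quasi-triangle inequality for $\norm{\cdot}_{\Sch_{p,\infty}}$ yields \eqref{b7}. The main point to be careful about — the only mild obstacle — is bookkeeping the quasinorm constant through the finite sum of three terms (harmless, since the number of terms is fixed) and verifying that the hypothesis $\rho>\alpha/2$ is precisely what places $b$ in $\ell^p(L^2)$ with $p=1/\alpha$; everything else is a direct application of Theorem~\ref{thm.C1}.
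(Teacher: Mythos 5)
Your decomposition into pieces each carrying a factor $\wt b_N(X)$, and the final dominated-convergence step for $\norm{\wt b_N}$, are fine in principle; the genuine gap is the membership claim on which everything rests. Under \eqref{b1a} one only has $v_b(n)=O(\jap{n}^{-\rho})$ with $\rho>\alpha/2$, which gives $b\in\ell^{2p}(L^2)$ (because $2p\rho>1$), \emph{not} $b\in\ell^{p}(L^2)$: the latter needs $p\rho>1$, i.e.\ $\rho>\alpha$. Your own computation already shows this ($p\rho=\rho/\alpha>1/2$ is, as you say, ``not quite it''), and the attempted repair ``$v_b\in\ell^{2\rho}\subset\ell^{p}$'' is incorrect: $v_b\in\ell^{q}$ requires $q\rho>1$, and $\ell^{2\rho}\subset\ell^{p}$ would require $2\rho\leq p$, and neither follows from $\rho>\alpha/2$. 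Concretely, for $b(x)=\jap{x}^{-\rho}$ with $\alpha/2<\rho\leq\alpha$ the hypothesis of Theorem~\ref{thm.b2} holds but $\norm{\wt b_N}_{\ell^{p}(L^2)}=\infty$ for every $N$, so Theorem~\ref{thm.C1} applied at exponent $p$ with $f=\wt b_N$, $g=a$ (and likewise your alternative bound $\norm{\wt b_N(X)a(D)}_{\Sch_{p,\infty}}\norm{c(X)}$) gives nothing. As written, your argument only covers the smaller range $\rho>\alpha$.

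The missing idea is to split the decay of $a$ between the two sides rather than loading it all onto one factor. Using the Schwarz-type inequality for the quasinorms $\Sch_{p,\infty}$ (see \cite{BSbook}, (11.6.17)),
\[
\norm{b(X)a(D)\wt b_N(X)}_{\Sch_{p,\infty}}
\leq
\norm{b(X)\abs{a(D)}^{1/2}}_{\Sch_{2p,\infty}}\,
\norm{\abs{a(D)}^{1/2}\wt b_N(X)}_{\Sch_{2p,\infty}},
\]
one applies Theorem~\ref{thm.C1} at exponent $2p$ to each factor: $b,\wt b_N\in\ell^{2p}(L^2)$ is exactly what $\rho>\alpha/2$ provides, and $\abs{a}^{1/2}\in\ell^{2p,\infty}(L^2)$ since $a\in\ell^{p,\infty}(L^2)$. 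Then the only $N$-dependent quantity is $\norm{\wt b_N}_{\ell^{2p}(L^2)}\to0$, and the quasi-triangle inequality over the (two or three) pieces finishes the proof. This square-root trick is precisely how the paper's proof proceeds; without it, or some substitute interpolation argument, the case $\alpha/2<\rho\leq\alpha$ is not reached.
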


\begin{proof}
Since
$$
\Psi-\Psi_N
=
b(X)a(D)\wt b_N(X)+\wt b_N(X)a(D)b_N(X),
$$
we see that
$$
\norm{\Psi-\Psi_N}_{\Sch_{p,\infty}}
\leq
2\norm{b(X)a(D)\wt b_N(X)}_{\Sch_{p,\infty}}.
$$
Using the ``Schwarz inequality for classes $\Sch_{p,\infty}$''
(see e.g. \cite{BSbook}, (11.6.17)), we obtain 
$$
\norm{b(X)a(D)\wt b_N(X)}_{\Sch_{p,\infty}}
\leq
\norm{b(X)\abs{a(D)}^{1/2}}_{\Sch_{2p,\infty}}
\norm{\abs{a(D)}^{1/2}\wt b_N(X)}_{\Sch_{2p,\infty}}.
$$
Under our assumptions on $a$ and $b$, we have 
$a\in \ell^{p,\infty}(L^2)$, $b\in \ell^{2p}(L^2)$ and 
$\norm{\wt b_N}_{\ell^{2p}(L^2)}\to0$ as
$N\to\infty$.
Therefore, by Theorem~\ref{thm.C1}, we have $b(X)\abs{a(D)}^{1/2}\in\Sch_{2p,\infty}$ and 
$$
\norm{\abs{a(D)}^{1/2}\wt b_N(X)}_{\Sch_{2p,\infty}}
\leq
C
\norm{a}^{1/2}_{\ell^{p,\infty}(L^2)}
\norm{\wt b_N}_{\ell^{2p}(L^2)}.
$$
This leads to \eqref{b7}.
\end{proof}

As already mentioned,   Theorem~\ref{thm.b2}  was proven in \cite{BS3}
for compactly supported $b$. Hence for an arbitrary $b\in L^{2p}_{\loc} ({\bbR})$   and all $N$ we have
$$
\Delta_{p}^\pm(\Psi_N)
=
\delta_{p}^\pm(\Psi_N)
=
\frac1{2\pi}
\bigl(
A(-\infty)_\pm^{p}+A(+\infty)_\pm^{p}
\bigr)
\int_{-N}^N \abs{b(x)}^{2p}dx.
$$
Thus, to extend  Theorem~\ref{thm.b2}  to general $b$ satisfying  estimate \eqref{b1a} 
we only have to  pass here to the limit $N\to\infty$.   
Lemma~\ref{appr} 
together with the estimates \eqref{C5}, \eqref{C6}
allows us to do this. This yields \eqref{b5}.

%%%%%%%%%%%%%%%%%%%%%%%%%
\section*{Acknowledgements}
%%%%%%%%%%%%%%%%%%%%%%%%%%
The authors are grateful to V.~V.~Peller and to J.~R.~Partington for useful discussions. 
Most of the work was completed during the two visits: by A.P. to the University of Rennes 1 and
by D.Y. to King's College London. The authors are grateful to both Departments and to the 
London Mathematical Society for the financial support.

\end{document}